\documentclass[12pt]{article}

\pagestyle{plain}

\usepackage{amsthm,amssymb,amsmath,color}
\usepackage[usenames,dvipsnames]{pstricks}
\usepackage{epsfig}
\usepackage{enumerate}
\usepackage{comment}
\usepackage{tikz}
\usetikzlibrary{intersections,arrows}
\usepackage{hyperref}

\setlength{\textwidth}{152.4truemm}
\setlength{\textheight}{208.6truemm}

\setlength{\oddsidemargin}{3mm}
\setlength{\evensidemargin}{-3mm}

\parindent=0.15in

\newtheorem{thm}{Theorem}[section]
\newtheorem{prop}[thm]{Proposition}
\newtheorem{lem}[thm]{Lemma}

\DeclareMathOperator{\Aff}{Aff}

\title{Distributive and anti-distributive Mendelsohn triple systems}
\author{Diane M. Donovan\thanks{Centre for Discrete Mathematics and Computing, University of Queensland, St Lucia 4072, AUSTRALIA}\,\,, 
Terry S. Griggs\thanks{Department of Mathematics and Statistics, The Open University, Walton Hall, Milton Keynes MK7 6AA, UNITED KINGDOM}\,\,,
  Thomas A. McCourt\thanks{School of Computing and Mathematics, Plymouth University, Drake Circus, Plymouth PL4 8AA, UNITED KINGDOM
 and Heilbronn Institute for Mathematical Research, University of Bristol, University Walk, Bristol BS8 1TW, UNITED KINGDOM  }\,\,,\\
 Jakub Opr\v{s}al\thanks{Department of Algebra, Faculty of Mathematics and Physics, Charles University, Sokolovsk\'{a} 83, 18675 Praha 8, CZECH REPUBLIC}\,\, and
David Stanovsk\'{y}\thanks{Department of Algebra, Faculty of Mathematics and Physics, Charles University, Sokolovsk\'{a} 83, 18675 Praha 8, CZECH REPUBLIC}}
\date{\small Keywords: Mendelsohn triple system, quasigroup, distributive, Moufang loop, Loeschian numbers.\\
\small Mathematics Subject Classification:  20N05, 05B07.\\
Research partially supported by the GA\v CR grant 13-01832S (Opr\v sal, Stanovsk\'y).}

\begin{document}
\maketitle

\begin{abstract}
We prove that the existence spectrum of Mendelsohn triple systems whose associated quasigroups satisfy distributivity corresponds to the Loeschian numbers, and provide some enumeration results. We do this by considering a description of the quasigroups in terms of commutative Moufang loops.

In addition we provide constructions of Mendelsohn quasigroups that fail distributivity for as many combinations of elements as possible. 

These systems are analogues of Hall triple systems and anti-mitre Steiner triple systems respectively.
\end{abstract}

\section{Introduction}
\label{sec:intro}

\subsection{Steiner and Mendelsohn triple systems}

Hall triple systems and anti-mitre systems are important and well-known types of Steiner triple systems. The aim of this paper is to introduce and prove the existence of analogous systems in the context of Mendelsohn triple systems. The natural concept for doing this is that of distributivity in the associated quasigroups. 
In the distributive case we make use of results from the general theory of commutative Moufang loops, in particular the Fischer-Smith-Galkin classification of finite distributive quasigroups.
First we define the terms that will be used.

A \textit{Steiner triple system} of order $v$, usually denoted by STS($v$), is an ordered pair $(V,\mathcal{B})$ where $V$ is a \textit{base set} of \textit{elements} or \textit{points} of cardinality $v$ and $\mathcal{B}$ is a collection of 3-element subsets of $V$, called \textit{blocks}, which collectively have the property that every pair of distinct elements of $V$ is contained in precisely one block. An STS($v$) exists if and only if $v\equiv 1\text{ or } 3\,(\text{mod }6)$, \cite{Kirk}.

A \textit{totally symmetric quasigroup}, or a \textit{Steiner quasigroup}, is an idempotent quasigroup $(V,\circ)$ satisfying equations $x\circ y=y\circ x$ and $x\circ(y\circ x)=y$ for every $x,y\in V$. In terms of translations, the two conditions are equivalent to $L_x=R_x$ and $L_xR_x=I$, for every $x\in V$. Here $L_x(y)=x\circ y$ and $R_x(y)=y\circ x$ are the \emph{left} and \emph{right translations}, respectively, and $I$ denotes the identity mapping.
Given an STS($v$), a Steiner quasigroup can be formed by defining an operation $\circ$ on the set $V$ using the rules $x\circ x=x$, for all $x\in V$, and $x\circ y=z$,  for all $x,y\in V$ where $\{x,y,z\}\in\mathcal{B}$. We say that the Steiner quasigroup so formed is associated with the Steiner triple system. Also note that starting with a Steiner quasigroup one can reverse the process to construct a Steiner triple system. 

A \textit{Mendelsohn triple system} of order $v$, usually denoted by MTS($v$), is an ordered pair $(V,\mathcal{B})$ where $V$ is a \textit{base set} of \textit{elements} or \textit{points} of cardinality $v$ and $\mathcal{B}$ is a collection of \textit{cyclically ordered blocks} $\langle x,y,z\rangle$ which collectively have the property that every \textit{ordered pair} of distinct elements is contained in a unique block, i.e., the above block contains the ordered pairs $(x,y),(y,z)$ and $(z,x)$. An MTS($v$) exists if and only if $v\equiv 0\text{ or } 1\,(\text{mod }3)$, $v\neq 6$, \cite{Men}.
Let $(V,\mathcal{B})$ be an MTS($v$). If $\langle a,b,c \rangle\in\mathcal{B}$ implies that $\langle a,c,b \rangle\in\mathcal{B}$, then the Mendelsohn triple system is formed from a Steiner triple system by writing each block of the STS($v$) in both of its two cyclic orders. An MTS($v$) which is not formed in this way will be called \textit{proper}. 

A \textit{semi-symmetric quasigroup}, or a \textit{Mendelsohn quasigroup}, is an idempotent quasigroup $(V,\circ)$ satisfying the equation $x\circ(y\circ x)=y$ for every $x,y\in V$. In terms of translations, the condition is equivalent to $L_xR_x=I$, for every $x\in V$.
Given an MTS($v$), a Mendelsohn quasigroup on $V$ can be formed in an analogous manner to the Steiner case, by the rules $x\circ x=x$, for all $x\in V$ and $x\circ y=z$, for all $x,y\in V$ where $\langle x,y,z\rangle\in\mathcal{B}$. 
If an MTS($v$) is not proper, then its associated Mendelsohn quasigroup is commutative and is identical to the Steiner quasigroup associated with the STS($v$) which forms the MTS($v$) in the above manner.

\subsection{Distributivity}

Important subclasses of Steiner triple systems are the \textit{affine Steiner triple systems} and the \textit{Hall triple systems}. As we shall see, these are the Steiner triple systems where the associated quasigroups are, respectively, medial and distributive. They can be defined as follows.
\begin{itemize}
\item[(i)] Let $\mathbb{F}_3$ be the field of three elements and $V=(\mathbb{F}_3)^n$. Let $\mathcal{B}$ be the set of blocks $\{\mathbf{x},\mathbf{y},\mathbf{z}\}$ where $\mathbf{x},\mathbf{y},\mathbf{z}\in V$, $\mathbf{x}+\mathbf{y}+\mathbf{z}=\mathbf{0}$ and $\mathbf{x}\neq\mathbf{y}\neq\mathbf{z}\neq\mathbf{x}$. This is the affine Steiner triple system AG($n,3$) of order $3^n$.
The associated Steiner quasigroup is $((\mathbb F_3)^n,\circ)$ where $\mathbf x\circ\mathbf y=-\mathbf x-\mathbf y$.
\item[(ii)] Hall triple systems were introduced in \cite{Hall} as Steiner triple systems in which for each $x\in V$, the automorphism group contains an involution with just $x$ as a fixed point. They can be characterised as Steiner triple systems in which every three points which do not form a block generate the affine triple system AG($2,3$) of order 9. Hall triple systems have order $3^m$, $m\geq 2$, and the class of Hall triple systems contains the class of affine Steiner triple systems. The smallest non-affine Hall triple system has order 81.
\end{itemize}

We start our account on distributivity with a well-known quasigroup construction.
Let $(G,+)$ be an Abelian group, and suppose that $k$ is an automorphism of $(G,+)$ such that $I-k$ is also an automorphism. Then $Q=(G,*_k)$ where 
$$x*_k y=(I-k)(x)+k(y),$$ for all $x,y\in G$, is an idempotent quasigroup. Such a quasigroup $Q$ is called an \textit{affine quasigroup} and is denoted $\Aff(G,k)$. 
For example, the quasigroup associated to AG($n,3$) is $\Aff((\mathbb Z_3)^n,-I)$. In Proposition \ref{prop:affine} we show that an affine quasigroup $\Aff(G,k)$ is
\begin{enumerate}
	\item[(S)] a Steiner quasigroup, if and only if the exponent of $G$ is 3 and $k=-I$; and
	\item[(M)] a Mendelsohn quasigroup, if and only if $k$ satisfies $k-k^2=I$.
\end{enumerate}
We will say a Mendelsohn triple system is \textit{affine} if its associated quasigroup is affine.

Affine quasigroups admit a convenient equational characterisation.
A quasigroup $(Q,\circ)$ is \textit{medial} if $(x\circ y)\circ(u\circ v)=(x\circ u)\circ(y\circ v)$, for all $x,y,u,v\in Q$. 
A special case of the well-known Toyoda-Bruck Theorem \cite{Bru} is the following characterisation.

\begin{thm}[Toyoda \& Bruck] \label{thm:toyoda-bruck}
Let $Q$ be an idempotent quasigroup. Then $Q$ is medial if and only if $Q$ is affine, i.e., isomorphic to some $\Aff(G,k)$.
\end{thm}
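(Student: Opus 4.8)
The plan is to prove both directions directly for idempotent quasigroups, the forward implication being a streamlined form of the classical Toyoda--Bruck argument. The backward direction is routine: since mediality passes through isomorphisms it is enough to verify that $\Aff(G,k)$ is medial, and on writing $\alpha=I-k$ and $\beta=k$ one checks that $(x *_k y) *_k (u *_k v)$ and $(x *_k u) *_k (y *_k v)$ both equal $\alpha^2(x)+\beta^2(v)$ together with the cross terms $\alpha\beta$ and $\beta\alpha$ applied to $y$ and to $u$; these cross terms agree because $\alpha\beta=k-k^2=\beta\alpha$ and $G$ is abelian.

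For the converse, suppose $(Q,\circ)$ is idempotent and medial, fix an arbitrary element $e\in Q$, and write $L_e,R_e$ for the translations by $e$, so that $R_e(e)=L_e(e)=e$. I would first record two specialisations of the medial law at $e$. Setting $x=u=v=e$ and using $e\circ e=e$ yields $R_eL_e=L_eR_e$, hence $L_e$, $R_e$ and their inverses pairwise commute; setting $x=v=e$ yields $L_e(y)\circ R_e(u)=L_e(u)\circ R_e(y)$ for all $y,u$. I would then introduce the operation
$$x\oplus y \;=\; R_e^{-1}(x)\circ L_e^{-1}(y).$$
As $R_e^{-1},L_e^{-1}$ are bijections and $\circ$ is a quasigroup operation, $(Q,\oplus)$ is a quasigroup, and $R_e(e)=L_e(e)=e$ makes $e$ a two-sided identity for $\oplus$. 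Commutativity of $\oplus$ comes from the identity $L_e(y)\circ R_e(u)=L_e(u)\circ R_e(y)$ together with the commuting of $L_e$ and $R_e$; and specialising the medial law at $u=v=e$ (respectively $x=y=e$), so that it reads $(x\circ y)\circ e=(x\circ e)\circ(y\circ e)$ (respectively $e\circ(u\circ v)=(e\circ u)\circ(e\circ v)$), gives after substitution that $R_e$ and $L_e$ are automorphisms of $(Q,\oplus)$.

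Associativity of $\oplus$ is the one point where the medial law does genuine work --- a commutative loop need not be associative, as the commutative Moufang loops used elsewhere in this paper demonstrate. Specialising the medial law at $u=e$ gives $(x\circ y)\circ L_e(v)=R_e(x)\circ(y\circ v)$; rewriting every product via $a\circ b=R_e(a)\oplus L_e(b)$ and pushing $R_e,L_e$ through $\oplus$ by the automorphism property turns this into
$$\bigl(R_e^2(x)\oplus R_eL_e(y)\bigr)\oplus L_e^2(v)\;=\;R_e^2(x)\oplus\bigl(L_eR_e(y)\oplus L_e^2(v)\bigr),$$
in which $R_eL_e(y)=L_eR_e(y)$. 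Since $R_e^2$, $R_eL_e$ and $L_e^2$ are bijections of $Q$, the three displayed arguments range independently over $Q$, so this says precisely that $\oplus$ is associative. Hence $G:=(Q,\oplus)$ is an abelian group with neutral element $e$. Finally, idempotency of $\circ$ reads $x=R_e(x)\oplus L_e(x)$ for every $x$, so $R_e=I-L_e$ as maps on $G$; putting $k:=L_e$, both $k$ and $I-k=R_e$ are automorphisms of $G$ and $x\circ y=R_e(x)\oplus L_e(y)=(I-k)(x)+k(y)$, i.e. $(Q,\circ)=\Aff(G,k)$.

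I do not expect any single step to be hard; the main obstacle should be organisational --- choosing $\oplus$ so that $e$ is forced to be its identity, then marshalling the right specialisations of the medial law in the right order and keeping the parenthesisation in the displays above honest until associativity has actually been established.
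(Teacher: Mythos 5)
Your argument is correct and complete: the backward direction is the routine expansion of $(x*_ky)*_k(u*_kv)$ using commutativity of $G$ and $(I-k)k=k(I-k)$, and the forward direction is the standard Toyoda--Bruck construction for the idempotent case --- fix $e$, derive $L_eR_e=R_eL_e$ and the exchange identity $L_e(y)\circ R_e(u)=L_e(u)\circ R_e(y)$ from specialisations of mediality, define $x\oplus y=R_e^{-1}(x)\circ L_e^{-1}(y)$, and verify identity, commutativity, the automorphism property of $L_e,R_e$, and associativity in that order, with idempotency finally forcing $R_e=I-L_e$. I checked the one step you flag only loosely (that $R_e,L_e$ being $\circ$-endomorphisms transfers to $\oplus$-automorphisms): it does go through, using only that $L_e^{-1}$ commutes with $R_e$, so no circularity with associativity arises. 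The paper itself gives no proof of this theorem --- it is quoted as a known special case of the Toyoda--Bruck theorem with references to Bruck's book and to Stanovsk\'y's survey --- so there is nothing to compare against beyond noting that your write-up is a correct, self-contained rendition of the classical argument those sources contain.
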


A quasigroup $(Q,\circ)$ is \textit{left distributive} if $x\circ (y\circ z)=(x\circ y)\circ(x\circ z)$, for all $x,y,z\in Q$, i.e., if the left translation $L_x$ is an automorphism, for every $x\in Q$. Dually, it is \textit{right distributive} if $(x\circ y)\circ z=(x\circ z)\circ(y\circ z)$, for all $x,y,z\in Q$, i.e., if the right translation $R_z$ is an automorphism, for every $z\in Q$. Notice that medial idempotent quasigroups are (both left and right) distributive and that distributive quasigroups are idempotent. In Steiner and Mendelsohn quasigroups, the left and right distributivity are equivalent properties: we have $L_x=R_x^{-1}$, hence $L_x$ is an automorphism if and only if $R_x$ is an automorphism.

Belousov \cite{Bel}, Theorem 8.6, provides an important characterisation of distributivity: an idempotent quasigroup is distributive if and only if every 3-generated subquasigroup is medial. In the context of Steiner (or Mendelsohn) triple systems, 3-generated subquasigroups correspond to 3-generated subsystems.
Hence, a quasigroup associated to a Steiner (or Mendelsohn) triple system $(V,\mathcal{B})$ is distributive if and only if every 3-generated subsystem of $(V,\mathcal{B})$ is affine. Observing that a Steiner quasigroup $\Aff(G,-I)$ has at most 3 generators if and only if $G=(\mathbb Z_3)^n$ with $n\leq2$, we obtain the following well known theorem, \cite{CRC}, Theorem 28.15, page 497:
a quasigroup associated to a Steiner triple system $(V,\mathcal{B})$ is distributive if and only if $(V,\mathcal{B})$ is a Hall triple system.
In the Mendelsohn setting, the situation is more complicated, since there are many 3-generated Mendelsohn quasigroups.

The affine representation generalises to distributive quasigroups, by allowing $G$ to be a more general structure, a commutative Moufang loop.
A \textit{commutative Moufang loop} $(G,+)$ is a commutative quasigroup that contains an identity element and satisfies the equation $(x+x)+(y+z)=(x+y)+(x+z)$, for all $x,y,z\in G$. The \textit{nucleus} $N(G,+)$ is the subset of $G$ whose elements associate with all elements of $G$. An automorphism $k$ of $(G,+)$ is \textit{nuclear} if $(I+k)(x)=x+k(x)\in N(G,+)$ for all $x\in G$. Starting with a commutative Moufang loop $(G,+)$ and a nuclear automorphism $k$ such that $I-k$ is also an automorphism, an idempotent quasigroup $Q=(G,*_k)$ is described by $x*_k y=(I-k)(x)+k(y),$ for all $x,y\in G$, and is said to be \emph{affine over a commutative Moufang loop}. We will use the notation $\Aff(G,k)$ as in the case of Abelian groups. Conditions (S) and (M) hold similarly, again see Proposition \ref{prop:affine}.

Distributive quasigroups are explicitly described by the Belousov-Soublin Theorem that appeared implicitly in \cite{Bel}, Section VIII.2, and explicitly in \cite{Sob}, Section II.7, Theorem 1.

\begin{thm}[Belousov \& Soublin] \label{thm:belousov-soublin}
Let $Q$ be a quasigroup. Then $Q$ is distributive if and only if $Q$ is affine over a commutative Moufang loop.
\end{thm}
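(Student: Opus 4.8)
The plan is to prove the two implications by quite different means: ``$\Leftarrow$'' by direct verification, and ``$\Rightarrow$'' by coordinatising $Q$ through a principal isotope, in the spirit of the proof of the Toyoda--Bruck theorem (Theorem~\ref{thm:toyoda-bruck}).

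For ``$\Leftarrow$'', let $Q=\Aff(G,k)$ with $(G,+)$ a commutative Moufang loop, $k$ a nuclear automorphism, and $I-k$ an automorphism. Idempotence is clear. For left distributivity one expands $L_a(x\circ y)$ and $L_a(x)\circ L_a(y)$ using the definition of $\circ$ together with the fact that $k$ and $I-k$ are automorphisms of $(G,+)$; after collecting terms, the equality to be proved becomes a medial-type identity $(r+s)+(u+w)=(r+u)+(s+w)$, where $r=(I-k)(p)$, $s=k(p)$ with $p=(I-k)(a)$, and $u,w$ range over $G$. Now $r+s=p$ and $r+2s=(I+k)(p)\in N(G,+)$, the latter by nuclearity of $k$; combined with the standard structural fact that cubes lie in the (central) nucleus of a commutative Moufang loop, this forces $r$ and $s$ to be congruent modulo $N(G,+)$, and then the medial-type identity drops out of the Moufang law. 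Right distributivity is entirely symmetric. I expect this direction to be routine modulo the quoted fact about cubes.

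For ``$\Rightarrow$'', let $(Q,\circ)$ be distributive, so that $Q$ is idempotent and $L_a,R_a\in\operatorname{Aut}(Q,\circ)$ for every $a$. Fix $e\in Q$ and put $x+y=R_e^{-1}(x)\circ L_e^{-1}(y)$. Several facts follow at once: $(Q,+)$ is a loop with identity $e$ (e.g.\ $e+y=e\circ L_e^{-1}(y)=y$, using $R_e^{-1}(e)=e$); the translations commute, $L_eR_e(x)=e\circ(x\circ e)=(e\circ x)\circ(e\circ e)=(e\circ x)\circ e=R_eL_e(x)$, by left distributivity and idempotence; hence $L_e,R_e\in\operatorname{Aut}(Q,+)$, since for instance $L_e(x+y)=L_eR_e^{-1}(x)\circ y=R_e^{-1}L_e(x)\circ y=L_e(x)+L_e(y)$; and $x\circ y=R_e(x)+L_e(y)$ with $R_e(a)+L_e(a)=a\circ a=a$. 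Setting $k:=L_e$, the last two identities give $x\circ y=R_e(x)+k(y)$ where $R_e(a)=a-k(a)=(I-k)(a)$ once $(Q,+)$ is known to be commutative; so $Q=\Aff\big((Q,+),k\big)$, provided $(Q,+)$ is a commutative Moufang loop and $k$ is a nuclear automorphism (that $I-k=R_e$ is then an automorphism is already recorded above).

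It remains to transfer the hypotheses to $(Q,+)$. Writing $\rho=R_e$, $\lambda=L_e$ and substituting $x\circ y=\rho(x)+\lambda(y)$ into the left and right distributive laws, while using that $\rho,\lambda$ are commuting automorphisms of $(Q,+)$, turns those laws, after the obvious reindexing, into the identities
\[ a+(b+c)=(\rho(a)+b)+(\lambda(a)+c), \qquad (a+b)+c=(a+\rho(c))+(b+\lambda(c)) \]
valid for all $a,b,c$, together with the ``complementarity'' $\rho(a)+\lambda(a)=a$. From these one must now deduce, in order, that $(Q,+)$ is commutative, that it satisfies the Moufang law $(a+a)+(b+c)=(a+b)+(a+c)$, and finally that $a+\lambda(a)\in N(Q,+)$ for every $a$ --- the last being precisely the nuclearity of $k=L_e$. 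This block of purely loop-theoretic manipulation is the heart of the matter and the classical, computation-heavy core of the theorem (essentially Belousov and Soublin); I expect extracting commutativity to be the main obstacle, since it genuinely requires \emph{both} distributive laws --- in the medial case it degenerates to $(Q,+)$ being an abelian group --- and brings into play the characteristic features of commutative Moufang loops (diassociativity, the nucleus being central, cubes being nuclear). Everything surrounding it is easy scaffolding.
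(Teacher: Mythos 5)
First, note that the paper itself gives no proof of Theorem~\ref{thm:belousov-soublin}: it is quoted as a classical result of Belousov and Soublin, with proofs referenced to \cite{Bel}, \cite{Sob} and \cite{Sta}. So there is no ``paper proof'' to compare against; your proposal has to stand on its own, and judged that way it has a genuine gap. Your ``$\Leftarrow$'' direction is a workable sketch: the reduction of left distributivity to the identity $(r+s)+(u+w)=(r+u)+(s+w)$ with $r+s=p$ and $r+2s=(I+k)(p)\in N(G,+)$ is correct, and combining nuclearity with the facts that cubes are nuclear and that the nucleus of a commutative Moufang loop is central does give $r\equiv s \pmod{N(G,+)}$, after which the Moufang law finishes it. (You should also note that nuclearity of $k$ together with $3x\in N$ gives nuclearity of $I-k$, which is what actually makes the right-distributive check ``symmetric''.) Likewise, the scaffolding of ``$\Rightarrow$'' --- the principal isotope $x+y=R_e^{-1}(x)\circ L_e^{-1}(y)$, the commutation $L_eR_e=R_eL_e$, the fact that $L_e,R_e\in\operatorname{Aut}(Q,+)$, and the decomposition $x\circ y=R_e(x)+L_e(y)$ with $R_e+L_e=I$ --- is all correct and standard.

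The problem is that the entire mathematical content of the theorem is the step you defer: deducing from the two translated identities $a+(b+c)=(\rho(a)+b)+(\lambda(a)+c)$ and $(a+b)+c=(a+\rho(c))+(b+\lambda(c))$, together with $\rho+\lambda=I$, that $(Q,+)$ is \emph{commutative}, satisfies the \emph{Moufang law}, and that $\lambda$ is \emph{nuclear}. You explicitly label this ``the heart of the matter'' and offer no argument for any of the three conclusions. Everything you do prove (that $+$ is a loop, that $\rho,\lambda$ are commuting automorphisms summing to the identity) holds for any distributive quasigroup essentially by formal manipulation and is also the easy part of the Toyoda--Bruck argument; the passage from ``loop with these automorphisms'' to ``commutative Moufang loop with nuclear $\lambda$'' is precisely what Belousov and Soublin had to work for, and it does not ``drop out'' of the stated identities by any routine computation. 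As written, the proposal is a correct reduction of the theorem to its hardest claim, not a proof of it. To close the gap you would need to either carry out that loop-theoretic derivation or explicitly invoke the relevant results of \cite{Bel} (Section VIII.2) or \cite{Sta} (Section 3), at which point you are citing the theorem rather than proving it.
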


A deep theory of commutative Moufang loops has been developed over the years \cite{Ben}. In particular, directly indecomposable non-associative commutative Moufang loops of order $n$ exist if and only if $n=3^k$ with $k\geq4$ (cf. the existence spectrum of non-affine Hall triple systems). We refer to \cite{Sta}, Section 3, for details on the affine representation theory for distributive quasigroups, including proofs of Theorems \ref{thm:toyoda-bruck} and \ref{thm:belousov-soublin}.
We will use one of the consequences of the general theory, the classification of finite distributive quasigroups (we use Galkin's interpretation of the so called Fischer-Smith theorem, \cite{Gal,Smith}).

\begin{thm} [Fischer \& Smith \& Galkin]
\label{thm:Galkin_Smith}
Let $v=p_1^{r_1}\ldots p_a^{r_a}$ where $p_1,\ldots, p_a$ are pairwise distinct primes and let $Q$ be a distributive quasigroup of order $v$. Then $Q$ is isomorphic to a direct product of distributive quasigroups $Q_1\times \ldots\times Q_a$ where $|Q_i|=p_i^{r_i}$. Moreover, if $Q_i$ is not affine, then $p_i=3$ and $r_i\geq 4$. 
\end{thm}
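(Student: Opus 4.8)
The plan is to derive this classification from the affine representation theory provided by Theorem~\ref{thm:belousov-soublin} together with the structure theory of commutative Moufang loops referenced above; none of the genuinely hard input is new here, the work is in assembling it.

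\textbf{Step 1 (affine representation).} By the Belousov--Soublin Theorem, $Q\cong\Aff(G,k)$ where $(G,+)$ is a commutative Moufang loop of order $v$ and $k$ is a nuclear automorphism of $G$ such that $I-k$ is also an automorphism.

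\textbf{Step 2 (Sylow decomposition of $G$).} By the Bruck--Slaby theorem every finite commutative Moufang loop is centrally nilpotent, and a finite nilpotent commutative Moufang loop is the internal direct product of its Sylow subloops; hence $G\cong G_1\times\cdots\times G_a$ with $|G_i|=p_i^{r_i}$, and $N(G)\cong N(G_1)\times\cdots\times N(G_a)$. Each $G_i$ is exactly the set of elements of $G$ whose order is a power of $p_i$, so $G_i$ is a characteristic subloop; consequently $k$ and $I-k$ restrict to automorphisms $k_i$ and $I-k_i$ of $G_i$, and $k_i$ is again nuclear because $(I+k_i)(x)=(I+k)(x)\in N(G)\cap G_i=N(G_i)$. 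A coordinatewise computation then gives $\Aff(G,k)\cong\Aff(G_1,k_1)\times\cdots\times\Aff(G_a,k_a)$, so $Q\cong Q_1\times\cdots\times Q_a$ with $Q_i:=\Aff(G_i,k_i)$ a distributive quasigroup of order $p_i^{r_i}$.

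\textbf{Step 3 (the ``moreover'' clause).} Argue by contraposition: assume $p_i\neq 3$ or $r_i\leq 3$; we show $Q_i$ is affine. If $p_i\neq 3$, then since the associator subloop of any commutative Moufang loop has exponent dividing $3$, Lagrange for commutative Moufang loops forces the associator subloop of $G_i$ to be trivial, so $G_i$ is an abelian group and $Q_i=\Aff(G_i,k_i)$ is affine. If $p_i=3$ and $r_i\leq 3$, then $|G_i|\in\{1,3,9,27\}$, and since directly indecomposable non-associative commutative Moufang loops (hence, by the decomposition in Step~2 applied internally, any non-associative commutative Moufang loop) first occur at order $3^4=81$, the loop $G_i$ is again an abelian group and $Q_i$ is affine. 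Hence if $Q_i$ is not affine then $p_i=3$ and $r_i\geq 4$.

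\textbf{Expected main obstacle.} The mathematical substance is carried entirely by the deep structure theory of commutative Moufang loops --- central nilpotence (Bruck--Slaby), the ensuing Sylow decomposition of finite nilpotent commutative Moufang loops, the fact that associator subloops have exponent $3$, and the threshold order $81$ for non-associativity --- all of which are invoked as black boxes from \cite{Ben,Sta}. The only part one must actually check is the bookkeeping in Step~2: that $\Aff$ commutes with direct products, and that a nuclear automorphism $k$ with $I-k$ invertible restricts to each characteristic Sylow subloop retaining both properties. This is routine but not completely automatic, since it uses that the nucleus and the property of being a $p_i$-element are both preserved by the decomposition and by automorphisms.
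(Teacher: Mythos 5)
The paper offers no proof of this theorem at all: it is imported as a known result, citing Galkin \cite{Gal} and Smith \cite{Smith}, with \cite{Sta} given as a reference for the surrounding affine representation theory. So there is no "paper proof" to match; what you have written is essentially the standard modern derivation (the one sketched in \cite{Sta}), namely Belousov--Soublin to get $Q\cong\Aff(G,k)$, then the structure theory of finite commutative Moufang loops to split $G$, then transport of the splitting to $Q$. Your argument is correct in outline and the bookkeeping in Step~2 (characteristic Sylow factors, restriction of $k$ and $I-k$, nuclearity passing to factors via $N(G_1\times\cdots\times G_a)=N(G_1)\times\cdots\times N(G_a)$, and $\Aff$ commuting with direct products) is exactly the part that needs checking and is done correctly.

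Two small points of precision. First, "a finite nilpotent commutative Moufang loop is the internal direct product of its Sylow subloops" is true, but the group-theoretic slogan "nilpotent $\Rightarrow$ product of Sylows" is not automatic for loops; the clean citation is Bruck's decomposition theorem (\cite{Bru}, Ch.~VIII): a finite commutative Moufang loop is the direct product of an abelian group of order prime to $3$ and a commutative Moufang loop of $3$-power order, after which the prime-to-$3$ part splits into Sylow subgroups as an abelian group. Second, in Step~3 for $p_i\neq 3$ you do not need Lagrange: every element of $G_i$ has order a power of $p_i$, while every associator in a commutative Moufang loop has order dividing $3$, so all associators in $G_i$ are already trivial. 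With those citations tightened, your proof is a legitimate self-contained derivation of the theorem from the black boxes the paper already assumes (Belousov--Soublin, Bruck--Slaby/Bruck's decomposition, and the order-$81$ threshold for non-associativity).
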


Section 2 is structured as follows. 
In Subsection \ref{subsec:affine}, we discuss the conditions characterising affine Steiner and Mendelsohn quasigroups (Proposition \ref{prop:affine}).
Subsection \ref{subsec:spectrum} determines the existence spectrum of distributive Mendelsohn quasigroups (Theorems \ref{thm:sufficient} and \ref{thm:necessary}).
In Subsection \ref{subsec:enumeration} we provide some enumeraton results on distributive Mendelsohn quasigroups, including prime and prime squared orders (Theorem \ref{thm:enumeration}), and order $3^4=81$.

\subsection{Anti-distributivity}

It is also natural to ask a related question. Given a Steiner (respectively Mendelsohn) quasigroup, it is easily verified that all ordered triples $(x,y,z)$, where at least two of the elements are equal or where $\{x,y,z\}$ (respectively $\langle x,y,z\rangle$) is a block of $\mathcal{B}$, satisfy distributivity. But, do there exist Steiner (respectively Mendelsohn) triple systems  where all ordered triples $(x,y,z)$ of distinct points which are not blocks violate distributivity? We will refer to such systems as being \textit{anti-distributive}. Again for Steiner quasigroups the answer is known. 

In a Steiner triple system a collection or configuration of five blocks isomorphic to $\{z,b,x\}$, $\{z,g,c\}$, $\{z,a,y\}$, $\{b,g,a\}$, $\{x,c,y\}$ is called a \textit{mitre}. Diagrammatically it can be represented as shown in Figure \ref{fig:mitre}.

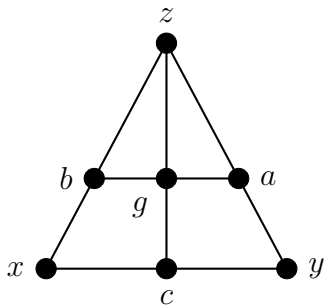
\begin{figure}[!hb]
\begin{center}
\begin{tikzpicture}[scale=0.8,>=stealth, vertex/.style={circle,inner sep=2.7,fill=black,draw}]

\coordinate (x) at (0,0);
\coordinate (c) at (2,0);
\coordinate (y) at (4,0);
\coordinate (b) at (0.8,1.5);
\coordinate (g) at (2,1.5);
\coordinate (a) at (3.2,1.5);
\coordinate (z) at (2,3.75);

\node at (x) [vertex,label=west:$x$]{};
\node at (c) [vertex,label=south:$c$]{};
\node at (y) [vertex,label=east:$y$]{};
\node at (b) [vertex,label=west:$b$]{};
\node at (g) [vertex,label=south west:$g$]{};
\node at (a) [vertex,label=east:$a$]{};
\node at (z) [vertex,label=north:$z$]{};

\draw [thick] (x) -- (z);
\draw [thick] (x) -- (y);
\draw [thick] (b) -- (a);
\draw [thick] (c) -- (z);
\draw [thick] (y) -- (z);
\end{tikzpicture}
\end{center}
\caption{Illustration of a mitre.}
\label{fig:mitre}
\end{figure}

There exist Steiner triple systems in which there are no mitres, so called \textit{anti-mitre} STS($v$). The distributive law describes the mitre; $c=x\circ y$, $b=x\circ z$, $a=y\circ z$, $g=c\circ z=(x\circ y)\circ z=b\circ a=(x\circ z)\circ(y\circ z)$. Thus, a Steiner quasigroup is anti-distributive if and only if the associated Steiner triple system is anti-mitre. 

Turning to Mendelsohn triple systems there appears to be no study of whether there exist Mendelsohn quasigroups that are anti-distributive. In Section \ref{sec:antidistributive} we give the first construction of such quasigroups that are associated with proper MTS($v$) for $v\equiv 3\text{ or }7\,(\text{mod }12)$, except for $v=19$. 

\section{Distributive Mendelsohn quasigroups}

\subsection{Affine Mendelsohn quasigroups}
\label{subsec:affine}

First, we show the conditions that characterise Steiner (respectively Mendelsohn) quasigroups that are affine over a commutative Moufang loop.
In the proof, we frequently use the well known property that commutative Moufang loops are diassociative \cite{Bru}, i.e., expressions involving only two elements do not depend on parenthesising.

\begin{prop}\label{prop:affine} 
Let $(G,+)$ be a commutative Moufang loop, and suppose that $k$ is a nuclear automorphism of $(G,+)$ such that $I-k$ is also an automorphism. Then 
$\Aff(G,k)$ is
\begin{enumerate}
	\item[(S)] a Steiner quasigroup, if and only if the exponent of $G$ is 3 and $k=-I$;
	\item[(M)] a Mendelsohn quasigroup, if and only if $k$ satisfies $k-k^2=I$.
\end{enumerate}
\end{prop}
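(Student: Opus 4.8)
The plan is to work directly with the semisymmetry and commutativity identities, reducing every computation in the non-associative loop $(G,+)$ to a $2$-generated subloop, where diassociativity of commutative Moufang loops lets me compute as in an abelian group. Throughout I would use that $k$ is an endomorphism of $(G,+)$, that $(I-k)(x)+k(x)=x$ for all $x$ (so $\Aff(G,k)$ is idempotent, by the inverse property), and that any subloop generated by two elements is an abelian group.

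For (M), I would first expand the semisymmetry term. Writing $z=y*_kx=(I-k)(y)+k(x)$ and applying the homomorphism $k$,
\[
x*_k(y*_kx)=(I-k)(x)+k(z)=(I-k)(x)+\bigl((k-k^2)(y)+k^2(x)\bigr),
\]
the bracketing on the right being the one forced by the definition of $*_k$. For the ``only if'' direction, substituting $x=0$ collapses this to $(k-k^2)(y)=y$ for all $y$, i.e.\ $k-k^2=I$. For the ``if'' direction, assume $k^2=k-I$; then $(k-k^2)(y)=y$, $k^2(x)=k(x)-x$, and $(I-k)(x)=x-k(x)$ is the inverse of $k^2(x)$. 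So, putting $s=(I-k)(x)$, the displayed expression becomes $s+\bigl(y+(-s)\bigr)$, which equals $y$ because $s$, $-s$ and $y$ all lie in the abelian subloop $\langle s,y\rangle$. As this is exactly the identity $x\circ(y\circ x)=y$, and idempotency already holds, $\Aff(G,k)$ is a Mendelsohn quasigroup.

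For (S), I would use that a Steiner quasigroup is precisely a commutative Mendelsohn quasigroup, so by (M) it remains only to characterise commutativity. The identity $(I-k)(x)+k(y)=(I-k)(y)+k(x)$ at $y=0$ forces $(I-k)(x)=k(x)$ for all $x$, i.e.\ $I=2k$ as operators; conversely, if $I=2k$ then $(I-k)(x)=k(x)$ and both sides equal $k(x+y)$, so commutativity is equivalent to $I=2k$. It then suffices to show that the pair of operator conditions $k-k^2=I$ and $I=2k$ is equivalent to ``$3x=0$ for all $x\in G$ and $k=-I$''. One direction is a direct substitution ($k=-I$ gives $k^2=I$, hence $k-k^2=-2I=I$ and $2k=-2I=I$). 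For the other, composing $I=2k$ with $k$ gives $2k^2=k$; substituting $k^2=k-I$ yields $k=2I$, whence $I=2k=4I$, so $3x=0$ for all $x$, and then $k=2I=-I$.

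The main obstacle is bookkeeping around non-associativity: one must ensure that each manipulation is carried out inside a $2$-generated (hence group) subloop, and that relations such as $k-k^2=I$ are read pointwise, so that subtracting or composing the operators $k,k^2,I$ and then evaluating always lands in such a subloop. The one genuinely delicate point is the ``if'' direction of (M), where the parentheses cannot be moved freely and the argument hinges on recognising $(I-k)(x)$ and $k^2(x)$ as mutual inverses, so that the expression reduces to $s+\bigl(y+(-s)\bigr)$ in $\langle s,y\rangle$. Note that nuclearity of $k$ is not needed for these identities themselves; it is only what guarantees that $\Aff(G,k)$ is a quasigroup in the first place.
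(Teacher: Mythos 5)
Your proof is correct and follows essentially the same route as the paper: the ``only if'' directions come from substituting the identity element to extract operator identities, and the ``if'' directions are verified pointwise inside two-generated subloops via diassociativity (in particular, the reduction of $s+\bigl(y+(-s)\bigr)$ to $y$ is exactly the paper's final step in (M)). The only cosmetic difference is that you obtain (S) by combining (M) with a characterisation of commutativity (solving $k-k^2=I$ together with $2k=I$), whereas the paper works directly from $L_0=R_0$ and $L_0^2=I$; the algebra is equivalent.
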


\begin{proof}
Let $0$ be the unit element in $(G,+)$ and recall that, in $\Aff(G,k)$, we have $L_x(y)=R_y(x)=(I-k)(x)+k(y)$ for every $x,y\in G$.

(S) Suppose that $\Aff(G,k)$ is a Steiner quasigroup. As $L_0=R_0$ we have $L_0(x)=k(x)=(I-k)(x)=R_0(x)$, hence $2k(x)=x$, and thus $4k^2(x)=2k(2k(x))=x$.
As $L_0^2=I$ we have $L_0^2(x)=0*_k(0*_k x)=x$ and from the definition of the binary operation $*_k$, $k^2(x)=L_0^2(x)=x$. Thus $4x=x$, so the exponent of $G$ is 3. In particular $2k(x)=-k(x)$ and, as $2k(x)=x$, we have $k(x)=-x$, for every $x\in G$.

Now suppose that the exponent of $G$ is 3 and that $k=-I$. Then $L_x(y)=x+x-y=-x-y=-y-x=y+y-x=R_x(y)$ and $L_x^2(y)=-x-(-x-y)=y$, for every $x,y\in G$.

(M) Suppose that $\Aff(G,k)$ is a Mendelsohn quasigroup. Then, as $L_0R_0=I$, we have $x=L_0R_0(x)=L_0((I-k)(x))=(k-k^2)(x)$ for every $x\in G$.

Now suppose that $k$ satisfies $k-k^2=I$, hence also $k^2=k-I$. Then, for every $x,y\in G$,
\begin{align*}
L_xR_x(y) &= (I-k)(x)+k((I-k)(y)+k(x)) \\ &= (I-k)(x)+[k^2(x) + (k-k^2)(y)] \\ &= (I-k)(x)+[(k-I)(x) + I(y)] \\ &= y,
\end{align*}
using $(I-k)(x)=-(k-I)(x)$ in the last step.
\end{proof}

Note that the conditions on an automorphism $k$ from (S) or (M) also imply that $I-k$ is an automorphism. If $k=-I$ and the exponent of $G$ is 3, then $I-k=2I$ is an automorphism, and if $k-k^2=I$, then $I-k=-k^2$ is also an automorphism. Also note that $k=-I$ is always nuclear.

Condition (M) is related to the properties of the polynomial $f=x^2-x+1$. In particular, if $G=\mathbb Z_{p^d}$ is a cyclic group, then $\Aff(G,k)$ is a Mendelsohn quasigroup if and only if $k$ is a root of $f$ modulo $p^d$ (acting on $G$ as an automorphism, since then $p\nmid k$). The number of roots is determined in the next lemma, used later in our classification results.

\begin{lem} \label{lem:roots_of_f}
Let $p$ be a prime, $d\geq 1$ and $f = x^2 - x + 1$. Then $f$ has
\begin{enumerate}[(i)]
	\item two distinct roots modulo $p^d$ if $p\equiv 1 \pmod 3$;
	\item no roots modulo $p^d$ if $p\equiv 2 \pmod 3$;
	\item a double root modulo 3, and no roots modulo $3^d$ for $d>1$.
\end{enumerate}
\end{lem}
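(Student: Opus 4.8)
The plan is to reduce the problem modulo $p^d$ to the problem modulo $p$ via Hensel's lemma, and then to count the roots of $f$ modulo $p$ using its discriminant $\Delta = 1-4 = -3$.

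The first thing I would record is that a root $x_0$ of $f$ modulo $p$ is automatically \emph{simple} whenever $p\neq 3$: since $f'(x_0) = 2x_0-1$ and $(2x_0-1)^2 = 4(x_0^2-x_0)+1 = -4+1 = -3$, the derivative $f'(x_0)$ is a unit modulo $p$ exactly when $p\neq 3$. Consequently, for $p\neq 3$, Hensel's lemma gives a bijection between the roots of $f$ modulo $p$ and the roots of $f$ modulo $p^d$, for every $d\geq 1$: every root modulo $p^d$ reduces to a root modulo $p$, and conversely each root modulo $p$ lifts to exactly one root modulo $p^d$. So for $p\neq 3$ it suffices to work modulo $p$.

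For an odd prime $p\neq 3$, the number of roots of $f$ modulo $p$ equals $1+\left(\tfrac{-3}{p}\right)$. The standard Legendre-symbol identity $\left(\tfrac{-3}{p}\right) = \left(\tfrac{p}{3}\right)$ (from quadratic reciprocity, or from the observation that $-3$ is a square modulo $p$ precisely when $\mathbb{F}_p$ contains a primitive cube root of unity) shows that $\left(\tfrac{-3}{p}\right) = 1$ if and only if $p\equiv 1 \pmod 3$. This yields the two distinct roots of part (i) (distinct because $\Delta=-3\not\equiv 0 \pmod p$, and distinctness is preserved on lifting to $p^d$), and, together with the trivial direct check $f(0)\equiv f(1)\equiv 1 \pmod 2$ covering the remaining prime $p=2$, it also yields part (ii).

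For $p=3$ one computes $f \equiv x^2+2x+1 = (x+1)^2 \pmod 3$, which gives the double root of part (iii). For $d\geq 2$, a root of $f$ modulo $3^d$ would in particular be a root modulo $9$ congruent to $-1$ modulo $3$; writing $x=-1+3t$ and expanding gives $f(-1+3t) = 3-9t+9t^2 \equiv 3 \pmod 9$, so $f$ has no root modulo $9$ and hence none modulo $3^d$. Since $f$ is quadratic and every computation involved is explicit, there is no genuine obstacle here; the only point that calls for a little care is the Hensel bookkeeping needed to conclude that in case (i) there are \emph{exactly} two roots modulo $p^d$ rather than merely at least two, which is precisely what the bijection above provides.
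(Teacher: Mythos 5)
Your proof is correct, but at the key counting step it takes a genuinely different route from the paper. To decide when $f=x^2-x+1$ has roots modulo $p$ (for $p>3$), you compute the Legendre symbol of the discriminant, using quadratic reciprocity in the form $\left(\tfrac{-3}{p}\right)=\left(\tfrac{p}{3}\right)$; the paper instead observes that any root $a$ satisfies $a^3=a^2-a=-1$, hence has order $6$ in $\mathbb{F}_p^*$, so roots exist precisely when $6\mid p-1$, i.e.\ $p\equiv 1\pmod 3$, with the factorisation $x^6-1=(x^3-1)(x+1)(x^2-x+1)$ supplying the converse. The paper's argument is more elementary and self-contained (only Lagrange's theorem in $\mathbb{F}_p^*$, no reciprocity), and it meshes with the explicit sixth-root-of-unity constructions used later in Lemmas 2.4 and 2.5; your discriminant computation is more systematic and generalises immediately to any quadratic. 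On the lifting step you are actually more careful than the paper: your observation that $f'(x_0)^2=-3$ is a unit for $p\neq 3$, and the resulting bijection between roots modulo $p$ and modulo $p^d$, makes precise both the ``exactly two'' claim in (i) and the non-existence in (ii), where the paper only gestures at ``a standard Hensel lifting argument.'' Your explicit check $f(-1+3t)=3-9t+9t^2\equiv 3\pmod 9$ likewise pins down the paper's ``one readily checks'' for the case $p=3$, $d>1$. Both proofs are complete and correct.
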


\begin{proof} 
First consider $d=1$.
The discriminant of $f$ is $-3$, so we get immediately that $f$ has a~double root modulo $p$ if and only if $p = 3$. Otherwise, the discriminant is not divisible by $p$, so we have either none, or two distinct roots.
If $p=2$, then $f$ has no roots. 
Suppose $p>3$ and let $a$ be a~root of $f$ in $\mathbb F_p$.
Since $a^3 = a^2 - a = -1$, the order of $a$ in $\mathbb F_p^*$ is $6$. If $p \equiv 2 \pmod 3$, then $6$ does not divide $|\mathbb F_p^*| = p-1$, contradiction. If $p\equiv1\pmod 3$, then $p \equiv 1 \pmod 6$, hence $6$ does divide $|\mathbb F_p^*|$. Let $k$ be the primitive sixth root of unity in $\mathbb F_p$. Then $k$ is a root of $x^6-1=(x^3-1)(x+1)(x^2-x+1)$, so $k$ is also the root of the polynomial $x^2-x+1$.

Now let $d>1$. Since there is no root modulo $p$ for any $p\equiv 2\pmod 3$, there is no root modulo $p^d$ either. Similarly, one readily checks that $f$ has no root modulo 9, hence no root modulo $3^d$. Finally, a standard Hensel lifting argument shows that there are two distinct roots of $f$ modulo $p^d$ for any $p\equiv 1\pmod 3$ and $d>1$. 
\end{proof}

\subsection{Existence spectrum}
\label{subsec:spectrum}

We start with a proof of the sufficient condition for existence of affine MTS($v$).

\begin{prop}
\label{thm:directprod}
The direct product of affine (over commutative Moufang loops) Mendelsohn quasigroups is an affine (over a commutative Moufang loop) Mendelsohn quasigroup.
\end{prop}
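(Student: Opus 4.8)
The plan is to exhibit the direct product explicitly as an affine quasigroup over a commutative Moufang loop. Given affine Mendelsohn quasigroups $\Aff(G_i,k_i)$ for $i$ in some index set (the case of two factors is typical, and arbitrarily many factors is identical), form the coordinatewise direct product $G=\prod_i G_i$ together with the map $k=\prod_i k_i$ acting in each coordinate. Since the class of commutative Moufang loops is defined by identities, it is closed under direct products, so $G$ is again a commutative Moufang loop, with unit $(0_i)_i$.

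First I would check that $k$ is an automorphism of $G$ and that $I-k$ is an automorphism of $G$; both are immediate, since addition, $k$ and $I-k$ are all computed coordinatewise and each $k_i$, $I-k_i$ is an automorphism of $G_i$. Next I would verify that $k$ is nuclear. For this I use the standard fact that $N(G)=\prod_i N(G_i)$: an element of a direct product associates with every other element precisely when each of its coordinates associates with every element of the corresponding factor. Then $(I+k)(x)$ has $i$-th coordinate $(I+k_i)(x_i)\in N(G_i)$ by nuclearity of $k_i$, so $(I+k)(x)\in \prod_i N(G_i)=N(G)$, as required.

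It then remains to match up the operations. By definition $x*_k y=(I-k)(x)+k(y)$, and since $+$, $k$ and $I-k$ all act coordinatewise, the $i$-th coordinate of $x*_k y$ is $(I-k_i)(x_i)+k_i(y_i)=x_i*_{k_i}y_i$. Hence $*_k$ is exactly the direct product operation, so the direct product of the $\Aff(G_i,k_i)$ equals $\Aff(G,k)$, which is affine over a commutative Moufang loop. Finally, to see it is a Mendelsohn quasigroup one applies condition~(M) of Proposition~\ref{prop:affine}: each $k_i$ satisfies $k_i-k_i^2=I$ on $G_i$, so $k-k^2=I$ holds on $G$ coordinatewise; equivalently, the semisymmetric identity $x\circ(y\circ x)=y$ is preserved by direct products.

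I expect no genuine obstacle: the only ingredient that is not pure coordinatewise bookkeeping is the description $N(\prod_i G_i)=\prod_i N(G_i)$ of the nucleus of a direct product of commutative Moufang loops, and that is routine, so the proof is essentially a sequence of verifications.
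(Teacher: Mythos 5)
Your proof is correct, but it takes a different route from the paper's. The paper's proof is a two-line appeal to the equational characterisations: by Theorem \ref{thm:belousov-soublin} (Belousov--Soublin), being affine over a commutative Moufang loop is equivalent to being distributive, and both distributivity and the Mendelsohn identities are equations, hence preserved by direct products; so the product of distributive Mendelsohn quasigroups is again a distributive Mendelsohn quasigroup, and is therefore affine over some commutative Moufang loop. You instead build the affine representation of the product explicitly as $\Aff\bigl(\prod_i G_i,\ \prod_i k_i\bigr)$, verifying closure of commutative Moufang loops under products, the automorphism conditions, nuclearity via $N\bigl(\prod_i G_i\bigr)=\prod_i N(G_i)$ (which you correctly flag as the one non-trivial bookkeeping step, and which does hold since association is checked coordinatewise), and condition (M). Your argument is longer but more informative: it tells you exactly which loop and which nuclear automorphism represent the product, whereas the paper's argument only guarantees that some such representation exists. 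Both are valid; the paper trades explicitness for brevity by leaning on Theorem \ref{thm:belousov-soublin}.
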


\begin{proof}
As they preserve all equations, the direct product of medial (respectively distributive) Mendelsohn quasigroups is a medial (respectively distributive) Mendelsohn quasigroup.  Thus, the statement follows from Theorems \ref{thm:toyoda-bruck} and \ref{thm:belousov-soublin}.
\end{proof}

\begin{lem}
\label{thm:menNet}
Let $p^d\equiv 1\,(\text{mod }6)$ where $p$ is a prime. Then there exists an affine MTS($p^d$).
\end{lem}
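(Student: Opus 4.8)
The plan is to realise the required Mendelsohn quasigroup as an affine quasigroup over a finite field, using the field structure to produce the needed automorphism. Take $G = (\mathbb{F}_{p^d}, +)$, the additive group of the field with $p^d$ elements. Since $p^d \equiv 1 \pmod 6$, in particular $p^d \equiv 1 \pmod 3$, so the multiplicative group $\mathbb{F}_{p^d}^*$ is cyclic of order divisible by $6$ and hence contains a primitive sixth root of unity $k \in \mathbb{F}_{p^d}$. Exactly as in the $d>1$, $p \equiv 1 \pmod 3$ case of Lemma \ref{lem:roots_of_f}, such a $k$ satisfies $k^2 - k + 1 = 0$ in $\mathbb{F}_{p^d}$, i.e. $k - k^2 = 1$. (One also notes $k \neq 0, 1$, so multiplication by $k$ is a bijection; similarly $1 - k = -k^2 \neq 0$.)

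Next I would interpret multiplication by $k$ as an automorphism of the additive group $(\mathbb{F}_{p^d}, +)$: the map $m_k \colon x \mapsto kx$ is additive by distributivity in the field, and bijective since $k$ is invertible, so $m_k \in \operatorname{Aut}(G)$. As an operator on $G$ it satisfies $m_k - m_k^2 = m_{k - k^2} = m_1 = I$, so the hypothesis of part (M) of Proposition \ref{prop:affine} holds. By that proposition (and the remark following it, which guarantees $I - m_k$ is automatically an automorphism when $m_k - m_k^2 = I$), the affine quasigroup $Q = \operatorname{Aff}(G, m_k)$, with $x *_{m_k} y = (1-k)x + ky$, is a Mendelsohn quasigroup; it is by construction affine over the commutative (associative) Moufang loop $(\mathbb{F}_{p^d},+)$. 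It has order $p^d$, so reversing the correspondence between Mendelsohn quasigroups and Mendelsohn triple systems described in Section \ref{sec:intro} yields an affine MTS($p^d$).

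There is essentially no hard obstacle here; the only point requiring a little care is the existence and the correct algebraic property of $k$. This is exactly the content already extracted in the proof of Lemma \ref{lem:roots_of_f}: the primitive sixth root of unity in a field whose multiplicative order is divisible by $6$ is a root of $x^2 - x + 1$, because $x^6 - 1 = (x^3-1)(x+1)(x^2-x+1)$ and $k$ is a root of neither $x^3 - 1$ (its order is $6$, not $1,2$ or $3$) nor $x+1$. So the proof reduces to assembling these pieces and invoking Proposition \ref{prop:affine}(M); I would expect the write-up to be just a few lines.
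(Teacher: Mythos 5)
Your proposal is correct and follows essentially the same route as the paper: take the additive group of $\mathbb{F}_{p^d}$, let $k$ be a primitive sixth root of unity (a root of $x^2-x+1$ via the factorisation $x^6-1=(x^3-1)(x+1)(x^2-x+1)$), and apply Proposition \ref{prop:affine}(M) to $\Aff(G,k)$. No gaps.
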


\begin{proof}
Let $\omega$ be a generator of the cyclic multiplicative group of the Galois field $\mathbb F_{p^d}$ of order $p^d-1=6s$. Let $k=\omega^s$ be a primitive sixth root of unity. Now, $x^6-1=(x^3-1)(x+1)(x^2-x+1)$, so $k$ is the root of the polynomial $x^2-x+1$. Hence as $k$ acts as an automorphism of the additive group $G$ of the field $\mathbb F_{p^d}$, it satisfies condition (M) of Proposition \ref{prop:affine}. Hence $\Aff(G,k)$ is the required example.
\end{proof}

\begin{lem}
\label{thm:menNet2}
There exists an affine MTS($2^{2d}$) for every $d\geq1$.
\end{lem}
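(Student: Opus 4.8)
The plan is to follow the template of Lemma~\ref{thm:menNet}, but working in characteristic~$2$, where the primitive sixth root of unity used there must be replaced by a primitive cube root of unity. This replacement is forced: a primitive sixth root of unity has even multiplicative order and so cannot exist in a field of characteristic~$2$. What condition~(M) of Proposition~\ref{prop:affine} really requires, however, is an automorphism $k$ that is a root of $f = x^2 - x + 1$, and over a field of characteristic~$2$ we have $f = x^2 + x + 1$, whose roots are exactly the primitive cube roots of unity. Note also that powers of~$2$ are genuinely not covered by Lemma~\ref{thm:menNet}, since $2^d$ is congruent to $2$ or $4$, never $1$, modulo~$6$.

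Concretely, I would take $G$ to be the additive group of the Galois field $\mathbb{F}_{2^{2d}}$, which has order $2^{2d}$ and exponent~$2$. Since $2^2 \equiv 1 \pmod 3$, we get $3 \mid 2^{2d}-1$, so the multiplicative group $\mathbb{F}_{2^{2d}}^{*}$ contains an element $\omega$ of order~$3$; then $\omega^2 + \omega + 1 = 0$, and since $-1 = 1$ in this field this says precisely that $\omega$ is a root of $f = x^2 - x + 1$. Let $k$ be multiplication by~$\omega$. As $\omega \ne 0$, $k$ is an automorphism of $(G,+)$; it is nuclear because $G$ is a group, so $N(G,+) = G$; and $k$ being a root of $f$ is exactly the identity $k - k^2 = I$, whence $I - k = -k^2$ is automatically an automorphism as well. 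Thus condition~(M) of Proposition~\ref{prop:affine} is satisfied.

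It follows that $\Aff(G,k)$ is a Mendelsohn quasigroup of order $2^{2d}$, affine over the commutative (indeed associative) loop $(G,+)$; in particular it is medial, hence distributive, so it is the quasigroup associated with a Mendelsohn triple system. As a consistency check, $2^{2d} \equiv 1 \pmod 3$ and $2^{2d} \ge 4$, so $2^{2d} \ne 6$ and such a system genuinely exists. I do not anticipate a real obstacle: the only point requiring care is the passage from sixth to third roots of unity dictated by the characteristic, after which the statement reduces entirely to Proposition~\ref{prop:affine}.
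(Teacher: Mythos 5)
Your proof is correct and follows essentially the same route as the paper: take the additive group of $\mathbb{F}_{2^{2d}}$, let $k$ be multiplication by a primitive cube root of unity (which exists since $3\mid 2^{2d}-1$), observe that in characteristic $2$ it is a root of $x^2-x+1$, and apply Proposition~\ref{prop:affine}(M). The extra checks you record (nuclearity, $I-k=-k^2$ being an automorphism) are exactly the remarks the paper makes immediately after Proposition~\ref{prop:affine}.
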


\begin{proof}
Let $\omega$ be a generator of the cyclic multiplicative group of the Galois field $\mathbb F_{2^{2d}}$ of order $2^{2d}-1=3s$. Let $k=\omega^s$ be a primitive third root of unity. Since the field has characteristic 2, $k$ is a root of the polynomial $x^3+1=(x+1)(x^2+x+1)$, and thus also of the polynomial $x^2+x+1$. As in the previous proof, $\Aff(G,k)$ is the required example, where $G$ is the additive group of $\mathbb F_{2^{2d}}$.
\end{proof}

\begin{lem}
\label{thm:menNet3}
There exists an affine MTS($3^d$) for every $d\geq1$.
\end{lem}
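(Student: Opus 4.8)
The plan is to exhibit the required quasigroup directly, using the simplest commutative Moufang loop of order $3^d$, namely the elementary abelian group. First I would set $G=(\mathbb Z_3)^d$ and take $k=-I$, the automorphism $x\mapsto -x$. Since $G$ is an abelian group it is in particular a commutative Moufang loop with nucleus $N(G,+)=G$, so $k=-I$ is automatically nuclear; moreover $I-k=2I$ is an automorphism because $2$ is invertible modulo $3$. Hence $\Aff(G,k)$ is a well-defined idempotent quasigroup, affine over the commutative Moufang loop $G$.

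It then remains to check condition (M) of Proposition \ref{prop:affine}. As the exponent of $G$ is $3$, we compute $k-k^2=-I-(-I)^2=-I-I=-2I=I$ when regarded as a map on $G$. By Proposition \ref{prop:affine}(M) this makes $\Aff(G,-I)$ a Mendelsohn quasigroup; being affine over a commutative Moufang loop, the associated Mendelsohn triple system, which has order $|G|=3^d$, is affine. Thus the construction itself produces the desired system for every $d\geq1$ (consistently, $3^d\equiv 0\pmod 3$ and $3^d\neq 6$).

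For continuity with the proofs of Lemmas \ref{thm:menNet} and \ref{thm:menNet2}, one could instead take $G$ to be the additive group of the Galois field $\mathbb F_{3^d}$ and observe that $x^2-x+1$ has the root $-1$ already in $\mathbb F_3$ (its discriminant $-3$ vanishes in characteristic $3$), so that the relevant automorphism is again scalar multiplication by $-1$; this yields exactly the same quasigroup $\Aff((\mathbb Z_3)^d,-I)$, which is the Steiner quasigroup of AG($d,3$), so the resulting MTS($3^d$) happens to be non-proper. That is harmless for the existence-spectrum argument, which only needs the system to be affine. I do not expect a genuine obstacle here: this is the easiest of the three building-block lemmas, and the only care required is the (routine) verification that $-I$ is nuclear and that $I-k$ is an automorphism, both immediate for abelian groups.
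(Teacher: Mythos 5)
Your proof is correct and is essentially identical to the paper's, which simply exhibits the Steiner quasigroup $\Aff((\mathbb Z_3)^d,-I)$ as the required example. Your additional verifications (that $-I$ is nuclear, that $I-k=2I$ is an automorphism, and that condition (M) holds because the exponent is $3$) are accurate and merely spell out what the paper leaves implicit.
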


\begin{proof}
An example is the Steiner quasigroup $\Aff((\mathbb Z_3)^d,-I)$.
\end{proof}

We are now in a position to state and prove the sufficient condition.

\begin{thm}
\label{thm:sufficient}
Let $v=p_1^{r_1}\ldots p_a^{r_a} q_1^{s_1}\ldots q_b^{s_b}$, where $p_1,\ldots, p_a,  q_1,\ldots, q_b$ are distinct primes, each $p_i\equiv 1\,(\text{mod }6)$ or $p_i=3$, and each $q_i\equiv 2\,(\text{mod }3)$. If each of the $s_i$, $1\leq i\leq b$ are even, then there exists an affine MTS($v$).
\end{thm}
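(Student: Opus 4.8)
The plan is to factor $v$ according to the three allowed types of prime-power divisors and to build an affine Mendelsohn quasigroup on each factor separately, then combine them via Proposition~\ref{thm:directprod}. First I would write $v = \prod_{i=1}^{a} p_i^{r_i} \cdot \prod_{i=1}^{b} q_i^{s_i}$ with each $p_i \equiv 1 \pmod 6$ or $p_i = 3$, and each $q_i \equiv 2 \pmod 3$ with $s_i$ even, and observe that it suffices to produce an affine MTS of each prime-power order $p_i^{r_i}$ and each $q_i^{s_i}$; the direct product of these is then an affine Mendelsohn quasigroup of order $v$ by Proposition~\ref{thm:directprod}, and an affine MTS($v$) is recovered from it.

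Next I would handle the three cases using the lemmas already proved. For a prime power $p^r$ with $p \equiv 1 \pmod 6$, note $p^r \equiv 1 \pmod 6$ as well (since $p \equiv 1 \pmod 6$ forces every power of $p$ to be $\equiv 1 \pmod 6$), so Lemma~\ref{thm:menNet} gives an affine MTS($p^r$). For $p = 3$, Lemma~\ref{thm:menNet3} gives an affine MTS($3^r$) for every $r \ge 1$, namely the Steiner quasigroup $\Aff((\mathbb{Z}_3)^r, -I)$. For a prime power $q^s$ with $q \equiv 2 \pmod 3$ and $s$ even, write $s = 2d$; then $q^{2d} = (q^2)^d$, and since $q \equiv 2 \pmod 3$ implies $q^2 \equiv 1 \pmod 3$, one checks $q^{2d} \equiv 1 \pmod 6$ when $q$ is odd, so Lemma~\ref{thm:menNet} applies; when $q = 2$, Lemma~\ref{thm:menNet2} directly provides an affine MTS($2^{2d}$). (Alternatively, one can treat all $q^{2d}$ uniformly by the argument of Lemma~\ref{thm:menNet2}: in $\mathbb{F}_{q^{2d}}$ one has $3 \mid q^{2d} - 1$, so a primitive cube root of unity $k$ exists, and if $3 \nmid q^{2d}-1$ were false\ldots --- in any case $3 \mid q^{2d}-1$ always holds since $q \not\equiv 0 \pmod 3$ and the order of $q$ mod $3$ divides $2$, hence divides $2d$.)

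Having produced an affine MTS on each prime-power factor, I would conclude: the associated affine Mendelsohn quasigroups $Q_1, \ldots, Q_a, Q_1', \ldots, Q_b'$ of orders $p_1^{r_1}, \ldots, p_a^{r_a}, q_1^{s_1}, \ldots, q_b^{s_b}$ have direct product $Q$ of order $v$, which by Proposition~\ref{thm:directprod} is an affine Mendelsohn quasigroup; since $|Q| = v$ and $Q$ is idempotent and semi-symmetric, it is the quasigroup of an MTS($v$) (one must note in passing that $v \ne 6$ is automatic, as $6$ is not of the required form), and this MTS is affine by definition. The only genuinely delicate point is the elementary number theory showing $q^{2d} \equiv 1 \pmod 6$ (equivalently, that $3 \mid q^{2d} - 1$ for $q \equiv 2 \pmod 3$), which follows since $q^2 \equiv 1 \pmod 3$; everything else is bookkeeping on top of Lemmas~\ref{thm:menNet}, \ref{thm:menNet2}, \ref{thm:menNet3} and Proposition~\ref{thm:directprod}. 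I expect no real obstacle here---the theorem is essentially a corollary of the preceding lemmas together with the multiplicativity of the affine construction.
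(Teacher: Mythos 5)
Your proposal is correct and follows exactly the route the paper takes: the paper's own proof is the one-line observation that the result follows by recursively applying Proposition~\ref{thm:directprod} to the quasigroups supplied by Lemmas~\ref{thm:menNet}, \ref{thm:menNet2} and \ref{thm:menNet3}. The additional bookkeeping you supply (that $p^r\equiv 1\pmod 6$ when $p\equiv 1\pmod 6$, and that $q^{2d}\equiv 1\pmod 6$ for odd $q\equiv 2\pmod 3$) is correct and merely makes explicit what the paper leaves implicit.
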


\begin{proof}
Recursively applying Proposition \ref{thm:directprod} using Lemmas \ref{thm:menNet}, \ref{thm:menNet2} and \ref{thm:menNet3} obtains the result.
\end{proof}

Next we prove that the sufficient condition given in Theorem \ref{thm:sufficient} is also necessary.
We will make use of the following results.

\begin{lem}
\label{lem:2statement}
Let $p$ be a prime such that $p\equiv 2\,(\text{mod }3)$, let $d$ be odd, and $f$ an irreducible polynomial over $\mathbb F_p$ of even degree. Then there is no matrix $A\in GL(d,\mathbb{F}_p)$ such that $f(A)=0$.
\end{lem}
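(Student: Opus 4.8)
The plan is to reduce the statement to a divisibility relation between $\deg(f)$ and $d$ via the minimal polynomial. Suppose, towards a contradiction, that there is a matrix $A \in GL(d,\mathbb{F}_p)$ with $f(A)=0$; we may assume $f$ is monic. Let $\mu$ be the minimal polynomial of $A$ over $\mathbb{F}_p$. Since $\mu$ divides $f$ and $f$ is irreducible while $\deg\mu\ge 1$, we must have $\mu=f$.

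Next I would pass to the characteristic polynomial $\chi_A$ of $A$. By the Cayley--Hamilton theorem $\mu\mid\chi_A$, and conversely every irreducible factor of $\chi_A$ divides $\mu$; since $\mu=f$ is irreducible, this forces $\chi_A=f^{m}$ for some $m\ge 1$. Comparing degrees gives $d=m\cdot\deg(f)$, so $\deg(f)\mid d$. (Equivalently, one can argue that $\mathbb{F}_p[A]\cong\mathbb{F}_p[x]/(f)$ is the field $\mathbb{F}_{p^{\deg f}}$, which turns $\mathbb{F}_p^{d}$ into a vector space over that field and hence forces $\deg(f)\mid d$.) But $\deg(f)$ is even and positive (a nonconstant irreducible of even degree has degree at least $2$), whereas $d$ is odd; so $\deg(f)\nmid d$, a contradiction.

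I do not expect any genuine obstacle here: the argument is elementary linear algebra. The only points requiring a little care are recalling precisely why the characteristic and minimal polynomials share the same set of irreducible factors, and disposing of degenerate cases (if $\deg(f)=0$ the statement is vacuous, and the hypothesis $A\in GL(d,\mathbb{F}_p)$ is in fact automatic, since a nonconstant irreducible $f$ of degree $>1$ satisfies $f\neq x$, hence $f(0)\neq 0$, so any root $A$ of $f$ is invertible). Finally, I should remark that the hypothesis $p\equiv 2\pmod 3$ is not used in the proof; it is recorded only because the intended application is to $f=x^2-x+1$, which is irreducible over $\mathbb{F}_p$ exactly when $p\equiv 2\pmod 3$ by Lemma~\ref{lem:roots_of_f}.
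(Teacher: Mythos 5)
Your proof is correct and follows essentially the same route as the paper's: identify $f$ as the minimal polynomial of $A$, deduce that the characteristic polynomial is a power of $f$, and derive the contradiction from the degree count $d = m\deg(f)$ with $\deg(f)$ even and $d$ odd. Your side remarks (the field-structure reformulation and the observation that the hypothesis $p\equiv 2\pmod 3$ is not actually used in the argument) are also accurate.
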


\begin{proof}
Assume there is a matrix $A\in GL(d,\mathbb{F}_p)$ such that $f(A)=0$. Since $f$ is irreducible, it is the minimal polynomial of $A$. Let $\chi$ be the characteristic polynomial of $A$. Then $f$ and $\chi$ have identical roots in the algebraic closure of $\mathbb{F}_p$, hence $\chi\mid f^n$ for some $n$. Since $f$ is irreducible, we have $\chi=f^m$ for some $m$. Now $d$, the size of the matrix $A$, is equal to the degree of its characteristic polynomial. But $\deg(\chi)=m\deg(f)$, contradicting the assumption that $d$ is odd.
\end{proof}
 
\begin{lem}\label{lem:necessary}
Let $p$ be prime such that $p\equiv 2\,(\text{mod }3)$ and let $d$ be odd. Then no distributive Mendelsohn quasigroup of order $p^d$ exists.
\end{lem}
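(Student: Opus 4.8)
The plan is to push the problem into linear algebra over $\mathbb F_p$ and then quote Lemmas \ref{lem:roots_of_f} and \ref{lem:2statement}. Suppose, for contradiction, that $Q$ is a distributive Mendelsohn quasigroup of order $p^d$ with $p\equiv 2\pmod 3$ and $d$ odd. Since $p\neq 3$, Theorem \ref{thm:Galkin_Smith} forces $Q$ to be affine, so by the Toyoda--Bruck theorem (Theorem~\ref{thm:toyoda-bruck}) we may write $Q\cong\Aff(G,k)$ for some Abelian group $G$ of order $p^d$ and some automorphism $k$ of $G$. (One can reach the same conclusion via Theorem~\ref{thm:belousov-soublin}: $Q$ is affine over a commutative Moufang loop, which for order $p^d$ with $p\neq 3$ is necessarily an Abelian group.) Because $Q$ is a Mendelsohn quasigroup, Proposition~\ref{prop:affine}(M) gives $k-k^2=I$; with $f=x^2-x+1$ this says $f(k)=0$ as an endomorphism of $G$. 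Moreover, since $p\equiv 2\pmod 3$, Lemma~\ref{lem:roots_of_f}(ii) shows $f$ has no root in $\mathbb F_p$, and being quadratic it is therefore irreducible over $\mathbb F_p$ of even degree $2$.

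The core step is to extract from $(G,k)$ a family of $\mathbb F_p$-vector spaces of total dimension $d$, each carrying an invertible map annihilated by $f$. For this I would use the chain of characteristic subgroups $G=p^0G\supseteq p^1G\supseteq p^2G\supseteq\cdots\supseteq p^NG=0$. Each $p^iG$ is fully invariant, so $k$ restricts to an automorphism of $p^iG$ and hence induces an automorphism $\bar k_i$ of the quotient $V_i=p^iG/p^{i+1}G$, which is an elementary Abelian $p$-group, i.e.\ an $\mathbb F_p$-vector space of some dimension $n_i$. Fixing a basis, $\bar k_i$ is represented by a matrix $A_i\in GL(n_i,\mathbb F_p)$, and reducing $f(k)=0$ modulo $p^{i+1}G$ yields $f(A_i)=0$. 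Counting orders along the chain, $p^d=|G|=\prod_{i=0}^{N-1}|V_i|=p^{\,n_0+n_1+\cdots+n_{N-1}}$, so $\sum_i n_i=d$.

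Now the contradiction is immediate: by Lemma~\ref{lem:2statement}, since $f$ is irreducible of even degree, there is no matrix in $GL(n_i,\mathbb F_p)$ annihilated by $f$ whenever $n_i$ is odd; hence every $n_i$ is even, and therefore $d=\sum_i n_i$ is even, contrary to $d$ being odd.

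I expect the only delicate points to be the initial reduction -- justifying that $Q$ is affine over an \emph{Abelian} group, so that Proposition~\ref{prop:affine}(M) applies and $k$ becomes an honest automorphism with $f(k)=0$ -- and the bookkeeping that the layer dimensions $n_i$ of $p^iG/p^{i+1}G$ sum to $d$; the irreducibility of $f$ and the matrix obstruction are handed to us directly by Lemmas~\ref{lem:roots_of_f} and \ref{lem:2statement}. An alternative to the filtration argument is to note that $f(k)=0$ makes $G$ a finite $p$-power-torsion module over $\mathbb Z[x]/(f)\cong\mathbb Z[\zeta_6]$, a PID in which $p$ is inert, whence $G$ splits as a direct sum of modules of order $p^{2a_j}$ and $|G|$ is an even power of $p$; but this invokes arithmetic of $\mathbb Z[\zeta_6]$ that the elementary filtration argument sidesteps.
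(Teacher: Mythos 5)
Your proof is correct, and it uses the same two key ingredients as the paper (the irreducibility of $f=x^2-x+1$ over $\mathbb F_p$ from Lemma~\ref{lem:roots_of_f} and the odd-size matrix obstruction of Lemma~\ref{lem:2statement}), but it organizes the reduction differently. The paper runs a minimal-counterexample induction: after invoking the structure theorem to write $G=\prod_i\mathbb Z_{p^{d_i}}$, it handles the elementary abelian case directly, and otherwise passes either to the socle $H\cong(\mathbb Z_p)^m$ (if $m$ is odd) or to the quotient $G/H$ of order $p^{d-m}$ (if $m$ is even), each time producing a smaller affine Mendelsohn quasigroup of odd exponent. You instead take the whole filtration $G\supseteq pG\supseteq p^2G\supseteq\cdots\supseteq 0$ at once, observe that $k$ induces an invertible map annihilated by $f$ on each layer $p^iG/p^{i+1}G$, and conclude that every layer dimension $n_i$ is even while $\sum_i n_i=d$ is odd. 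This is a genuine simplification: it is non-inductive, it avoids any case split on the parity of $m$, and it does not need the classification of finite Abelian groups at all (only that a group of order $p^d$ is a $p$-group). You also make explicit a step the paper leaves implicit inside the lemma, namely the reduction from ``distributive'' to ``affine over an Abelian group'' via Theorem~\ref{thm:Galkin_Smith} and Theorem~\ref{thm:toyoda-bruck}, which is needed because Proposition~\ref{prop:affine}(M) is what converts the Mendelsohn identity into $f(k)=0$. The only point worth stating carefully in a final write-up is that the induced map on each layer is surjective (because $k(p^iG)=p^iG$) and hence invertible, so that Lemma~\ref{lem:2statement}, which is phrased for matrices in $GL(n_i,\mathbb F_p)$, applies; you have this, and in any case the lemma's argument only uses that $f$ is the minimal polynomial.
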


\begin{proof}
First we show that there is no affine Mendelsohn quasigroup $\Aff((\mathbb Z_p)^d,k)$, for any $k$.
The automorphisms of the group $(\mathbb{Z}_p)^d$ are precisely the automorphisms of the vector space $(\mathbb{F}_p)^d$, i.e., elements of $GL(d,\mathbb{F}_p)$. 
By Lemma \ref{lem:roots_of_f}, the polynomial $f=1-x+x^2$ is irreducible over the field $\mathbb{F}_p$, hence from Lemma \ref{lem:2statement} there is no $A\in GL(d,\mathbb{F}_p)$ such that $f(A)=0$. Hence there is no Mendelsohn quasigroup $\Aff((\mathbb Z_p)^d,k)$ by Proposition \ref{prop:affine}.

We continue by induction. Let $\Aff(G,k)$ be an affine Mendelsohn quasigroup of order $p^d$ with the smallest possible odd $d$. Without loss of generality, $G=\prod_{i=1}^m \mathbb{Z}_{p^{d_i}}$ where $\sum_{i=1}^{m}d_i=d$, and we have that $d_i>1$ for at least one $i$. Let $H=\prod_{i=1}^m\langle p^{d_i-1}\rangle\leq G$, then $H\cong (\mathbb{Z}_p)^{m}$.
If $m$ is odd, then $\Aff(H,k|_H)$ is an affine Mendelsohn quasigroup of order $p^{m}$ where $m<d$, a contradiction. 
Assume that $m$ is even. Consider the group $G/H\cong\prod_{i=1}^{m} \mathbb{Z}_{p^{d_i-1}}$. Then $|G/H|=p^{\sum_{i=1}^{m}d_i-1}=p^{d-m}$, with $d-m$ odd. Moreover, $k/H$ and $(I-k)/H$ are automorphisms of $G/H$. Hence $\Aff(G/H,k/H)$ is an affine Mendelsohn quasigroup with a smaller odd exponent, again a contradiction.
\end{proof}

We are now in a position to state and prove the necessary condition.

\begin{thm}
\label{thm:necessary}
Let $v=p_1^{r_1}\ldots p_a^{r_a} q_1^{s_1}\ldots q_b^{s_b}$, where $p_1,\ldots, p_a,  q_1,\ldots, q_b$ are pairwise distinct primes, each $p_i\equiv 1\,(\text{mod }6)$ or $p_i=3$, and each $q_i\equiv 2\,(\text{mod }3)$. If some of the $s_i$, $1\leq i\leq b$ is odd, then no distributive Mendelsohn quasigroup of order $v$ exists.
\end{thm}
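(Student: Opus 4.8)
The plan is to combine the Fischer--Smith--Galkin classification (Theorem~\ref{thm:Galkin_Smith}) with the prime-power case already disposed of in Lemma~\ref{lem:necessary}. Suppose, for contradiction, that $Q$ is a distributive Mendelsohn quasigroup of order $v$, and fix an index $i$ with $s_i$ odd. By Theorem~\ref{thm:Galkin_Smith}, $Q$ is isomorphic to a direct product of distributive quasigroups, one for each prime divisor of $v$, whose orders are precisely the prime-power parts of $v$. In particular there is a direct factor $Q'$ of $Q$ with $|Q'| = q_i^{s_i}$.

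Next I would note that $Q'$ inherits the Mendelsohn property. Being a direct factor, $Q'$ is a homomorphic image of $Q$ via the corresponding projection, and the defining laws of a Mendelsohn quasigroup --- idempotence $x\circ x = x$ and semisymmetry $x\circ(y\circ x) = y$ --- are equations, hence preserved under homomorphic images. So $Q'$ is a distributive Mendelsohn quasigroup of order $q_i^{s_i}$, where $q_i\equiv 2\,(\text{mod }3)$ and $s_i$ is odd. This directly contradicts Lemma~\ref{lem:necessary}, completing the proof.

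The argument is short and I do not anticipate a real obstacle; the only point requiring care is that the decomposition supplied by Theorem~\ref{thm:Galkin_Smith} is the one along the prime factorisation of $v$, so that exactly one factor has order $q_i^{s_i}$ (rather than, say, several factors jointly absorbing the $q_i$-part) --- but this is precisely the content of that theorem, together with the observation that each factor in the product is again a quasigroup. As an alternative to quoting the full classification, one could instead observe that any directly indecomposable distributive quasigroup whose order is a prime power $q^s$ with $q\neq 3$ is affine over an abelian group (by the structure theory of commutative Moufang loops), and then feed this into the inductive argument behind Lemma~\ref{lem:necessary}; quoting Theorem~\ref{thm:Galkin_Smith} is cleaner and keeps the proof to a few lines.
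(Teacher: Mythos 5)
Your proposal is correct and follows essentially the same route as the paper: invoke Theorem~\ref{thm:Galkin_Smith} to decompose $Q$ into prime-power-order distributive factors, observe that each factor is again a Mendelsohn quasigroup because the defining identities pass to the projections, and then contradict Lemma~\ref{lem:necessary} on the factor of order $q_i^{s_i}$ with $s_i$ odd. The only cosmetic difference is that the paper additionally notes the $q_i$-factors are affine (again by Theorem~\ref{thm:Galkin_Smith}, since $q_i\neq 3$), which is the case actually handled in the proof of Lemma~\ref{lem:necessary}.
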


\begin{proof}
Assume $Q$ is a distributive Mendelsohn quasigroup of order $v$. According to Theorem \ref{thm:Galkin_Smith}, $Q$ is isomorphic to a direct product $Q_1\times\ldots Q_a\times R_1\times\ldots R_b$ of distributive Mendelsohn quasigroups (indeed, all fibres satisfy the equations satisfied by $Q$) such that $|Q_i|=p_i^{r_i}$ and $|R_i|=q_i^{s_i}$ for every $i$. Since all $q_i\neq 3$, by Theorem \ref{thm:Galkin_Smith} the quasigroups $R_i$ are affine, contradicting Lemma \ref{lem:necessary}.
\end{proof}

Theorems \ref{thm:sufficient} and \ref{thm:necessary} combine to provide necessary and sufficient conditions for the existence of distributive Mendelsohn triple systems. The existence spectrum is the set of Loeschian numbers, $\{x^2+xy+y^2:x,y\geq 1\}$, see the Encyclopaedia of Integer Sequences, \url{http://oeis.org/A003136}.

\subsection{Enumeration}
\label{subsec:enumeration}

Let $a(v)$ denote the number of isomorphism classes of affine Mendelsohn quasigroups of order $v$; let $b(v)$ denote the number of isomorphism classes of non-affine distributive Mendelsohn quasigroups of order $v$; and let $d(v)=a(v)+b(v)$. 

If $u$ and $v$ are coprime, the classification of finite Abelian groups implies that $a(uv)=a(u)a(v)$. Furthermore, the Galkin-Smith Theorem \ref{thm:Galkin_Smith} implies that $b(3^d v)=a(v)b(3^d)$ whenever $3\nmid v$ and $d\geq 0$, and that $b(1)=b(3)=b(9)=b(27)=0$. Hence, for a complete evaluation of $d(v)$, it is sufficient to determine the values $a(p^d)$ for every prime power $p^d$, and the values $b(3^d)$ for every $d\geq4$.

Our enumeration results are based on the following fact from \cite{KepNem}, Lemma 12.3.

\begin{prop}[Kepka \& N\v{e}mec] \label{thm:kepka-nemec}
Let $G_1$ and $G_2$ be commutative Moufang loops, $f$ a nuclear automorphism of $G_1$ and $g$ a nuclear automorphism of $G_2$ such that both $I-f$ and $I-g$ are automorphisms. Then ${\rm Aff}(G_1,f)\cong{\rm Aff}(G_2,g)$ if and only if there exists a group isomorphism $\psi:G_1\simeq G_2$ where $g=\psi f\psi^{-1}$.
\end{prop}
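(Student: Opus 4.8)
The plan is to prove both directions directly, working with the explicit description of the affine operation. For the easy direction, suppose $\psi: G_1 \to G_2$ is a group isomorphism with $g = \psi f \psi^{-1}$. I would simply verify that $\psi$ is then a quasigroup isomorphism $\mathrm{Aff}(G_1,f) \to \mathrm{Aff}(G_2,g)$: for $x,y \in G_1$,
$$\psi(x *_f y) = \psi\bigl((I-f)(x) + f(y)\bigr) = (I - \psi f \psi^{-1})(\psi x) + (\psi f \psi^{-1})(\psi y) = \psi(x) *_g \psi(y),$$
where the middle equality uses that $\psi$ is an additive homomorphism and the conjugation identity. So this direction is a routine computation.

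For the hard direction, suppose $\varphi: \mathrm{Aff}(G_1,f) \to \mathrm{Aff}(G_2,g)$ is a quasigroup isomorphism. The first step is to normalise $\varphi$ so that it fixes the unit: let $0_1, 0_2$ be the units of $G_1, G_2$, set $c = \varphi(0_1) \in G_2$, and replace $\varphi$ by $\psi = \lambda \circ \varphi$ where $\lambda: G_2 \to G_2$ is an appropriate translation-type bijection of the loop that sends $c$ to $0_2$ and is still an isomorphism of $\mathrm{Aff}(G_2,g)$ onto itself. The natural candidate is the affine-quasigroup analogue of ``subtracting a constant''; one should check, using diassociativity of commutative Moufang loops and the fact that $g$ is a nuclear automorphism, that such a $\lambda$ exists and is a quasigroup automorphism of $\mathrm{Aff}(G_2,g)$. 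After this reduction we may assume $\varphi(0_1) = 0_2$.

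The second step is to recover the loop operation and the automorphism from the quasigroup structure together with the basepoint. In $\mathrm{Aff}(G,k)$ with distinguished unit $0$, one has $L_0(y) = k(y)$ and $R_0(x) = (I-k)(x)$, and the loop addition can be reconstructed as $x + y = L_{R_0^{-1}(x)}(y)$ or an equivalent term in the quasigroup operations and the constant $0$; the key point is that $+$, the unit $0$, and $k$ are all first-order definable from $(\mathrm{Aff}(G,k), 0)$. Since $\varphi$ preserves the quasigroup operation and the basepoint, it must preserve every such definable structure, so $\varphi$ is simultaneously a group isomorphism $G_1 \to G_2$ and satisfies $\varphi \circ k_1 = k_2 \circ \varphi$, i.e. $g = \varphi f \varphi^{-1}$. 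Setting $\psi = \varphi$ completes the argument.

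The main obstacle I anticipate is the first reduction step: in a genuine (non-associative) commutative Moufang loop one cannot just translate by $-c$ naively, because left translations of the loop need not be automorphisms of $\mathrm{Aff}(G,k)$. The correct $\lambda$ must be chosen as a quasigroup automorphism of $\mathrm{Aff}(G_2,g)$ carrying $\varphi(0_1)$ to $0_2$, and showing that the family of such automorphisms acts transitively on $G_2$ is exactly where the hypotheses that $g$ is \emph{nuclear} and that the loop is Moufang (hence diassociative) get used. Once the basepoint is normalised, the remaining definability bookkeeping is routine, and this is presumably why Kepka and Němec could state the result compactly. I would cite \cite{KepNem} for the technical loop-theoretic lemmas underpinning step one and present steps two and three in full.
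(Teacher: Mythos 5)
The paper does not actually prove this proposition; it is imported verbatim from Kepka and N\v{e}mec \cite{KepNem}, Lemma 12.3, so there is no in-paper argument to compare yours against. Judged on its own, your proof is essentially correct and is the standard way to establish such ``isomorphism of affine forms'' statements. The forward direction is exactly the routine computation you give. For the converse, both of your steps go through, but you can tighten them considerably. Step one, which you flag as the main obstacle, is in fact a one-liner: by the Belousov--Soublin theorem, $\Aff(G_2,g)$ is a \emph{distributive} quasigroup (this is precisely where nuclearity of $g$ and invertibility of $I-g$ are consumed), so every left translation $L_a$ is a quasigroup automorphism, and since $L_a(c)=(I-g)(a)+g(c)=0_2$ is solvable for $a$, these automorphisms act transitively. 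There is no need to analyse loop translations $x\mapsto x+c$ directly. Step two is right in spirit, but your displayed reconstruction formula is off: $L_{R_0^{-1}(x)}(y)=x+k(y)$, not $x+y$. The correct term is $x+y=R_0^{-1}(x)*_k L_0^{-1}(y)$, together with $k=L_0$ and $I-k=R_0$; since a bijection preserving $*$ between quasigroups automatically preserves the division operations, a normalised isomorphism $\varphi$ with $\varphi(0_1)=0_2$ satisfies $\varphi L_{0_1}\varphi^{-1}=L_{\varphi(0_1)}=L_{0_2}$, giving $g=\varphi f\varphi^{-1}$ and $\varphi(x+y)=\varphi(x)+\varphi(y)$ at once. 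With those two repairs your argument is a complete, self-contained proof.
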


We start with the enumeration of affine Mendelsohn quasigroups of  prime order or prime squared order.

\begin{thm}\label{thm:enumeration} 
Let $p$ be a~prime.
\begin{enumerate}[(i)]
	\item If $p \equiv 1 \pmod 3$, then $a(p)=2$ and $a(p^2)=5$.
	\item If $p \equiv 2 \pmod 3$, then $a(p)=0$ and $a(p^2)=1$.
	\item $a(3)=1$ and $a(9)=2$.
\end{enumerate}
\end{thm}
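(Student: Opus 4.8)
The plan is to reduce the count to conjugacy classes of automorphisms, via the Kepka--N\v{e}mec correspondence, and then to inspect the (at most two) groups admissible in each order. By Proposition~\ref{thm:kepka-nemec}, for a fixed commutative Moufang loop $G$ the isomorphism classes of affine Mendelsohn quasigroups of the form $\Aff(G,k)$ are in bijection with the orbits of
\[
  K(G) = \{\, k \in \Aut(G) : k - k^2 = I \,\}
\]
under the conjugation action of $\Aut(G)$; by Proposition~\ref{prop:affine} and the remark after it, the condition $k-k^2=I$ already guarantees that $I-k$ is an automorphism. Since every commutative Moufang loop of order $p$ or $p^2$ is an abelian group (non-associative ones have order $3^k$ with $k\ge4$), and every automorphism of an abelian group is nuclear, nuclearity imposes nothing extra. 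Writing $N(G)$ for the number of $\Aut(G)$-conjugacy classes contained in $K(G)$, distinct isomorphism types of groups yield non-isomorphic quasigroups (again by Proposition~\ref{thm:kepka-nemec}), so $a(v) = \sum_G N(G)$, the sum ranging over the isomorphism types of abelian groups $G$ of order $v$.

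For $v = p$ the only group is $\mathbb Z_p$; its automorphism group $\mathbb Z_p^\times$ is abelian, so conjugation is trivial and $N(\mathbb Z_p)$ equals the number of roots of $f = x^2 - x + 1$ in $\mathbb Z_p$ (a root is automatically a unit, as $f(0)=1$). The values $a(p)=2$ for $p\equiv1\pmod3$, $a(p)=0$ for $p\equiv2\pmod3$, and $a(3)=1$ are then immediate from Lemma~\ref{lem:roots_of_f}(i)--(iii).

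For $v = p^2$ the abelian groups are $\mathbb Z_{p^2}$ and $(\mathbb Z_p)^2$. For $\mathbb Z_{p^2}$ the automorphism group is again abelian, so $N(\mathbb Z_{p^2})$ is the number of roots of $f$ modulo $p^2$: by Lemma~\ref{lem:roots_of_f} this is $2$ when $p\equiv1\pmod3$ and $0$ when $p\equiv2\pmod3$ or $p=3$. For $(\mathbb Z_p)^2$ we have $\Aut((\mathbb Z_p)^2)=GL(2,\mathbb F_p)$, and $N((\mathbb Z_p)^2)$ is the number of conjugacy classes of $A\in GL(2,\mathbb F_p)$ with $f(A)=0$; such an $A$ has minimal polynomial dividing $f$, so by rational canonical form the count is governed by the factorisation of $f$ over $\mathbb F_p$. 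If $p\equiv1\pmod3$, then $f=(x-\alpha)(x-\beta)$ with $\alpha\ne\beta$ distinct nonzero roots, yielding the three classes of $\alpha I$, $\beta I$ and $\mathrm{diag}(\alpha,\beta)$; if $p\equiv2\pmod3$, then $f$ is irreducible and $A$ must be its companion matrix, one class; if $p=3$, then $f=(x+1)^2$, so with $B=A+I$ one needs $B^2=0$ for a $2\times2$ matrix $B$, giving the two classes $B=0$ and $B\sim\left(\begin{smallmatrix}0&1\\0&0\end{smallmatrix}\right)$. In every case invertibility of $A$ is free because $f(0)=1\ne0$. Summing the two contributions gives $a(p^2)=2+3=5$ for $p\equiv1\pmod3$, $a(p^2)=0+1=1$ for $p\equiv2\pmod3$, and $a(9)=0+2=2$, as claimed.

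The remaining work is routine: the invertibility checks, and verifying that the listed canonical forms in the $GL(2,\mathbb F_p)$ computation are pairwise non-conjugate and exhaustive. The one point that genuinely requires care is the degenerate case $p=3$ over $(\mathbb Z_3)^2$: there $f$ has a repeated root, so besides the scalar matrix $-I$ one must also include the non-diagonalisable Jordan block, which is exactly what makes $N((\mathbb Z_3)^2)=2$ and hence $a(9)=2$.
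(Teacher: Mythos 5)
Your proposal is correct and follows essentially the same route as the paper: reduce to counting $\Aut(G)$-conjugacy classes of automorphisms $k$ with $k^2-k+I=0$ via Proposition~\ref{thm:kepka-nemec}, use Lemma~\ref{lem:roots_of_f} for the cyclic groups, and classify the relevant conjugacy classes in $GL(2,\mathbb F_p)$ by canonical forms, including the nilpotent Jordan block in the degenerate case $p=3$. Your explicit remarks that nuclearity is automatic for abelian groups and that $k-k^2=I$ forces $I-k$ to be an automorphism are points the paper leaves implicit, but the substance of the argument is the same.
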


\begin{proof}
First consider the prime orders. In this case any affine Mendelsohn quasigroup of order $p$ is isomorphic to $\Aff(\mathbb Z_p,k)$ where $k\in\mathbb Z_p^*$ is a root of the polynomial $f=x^2-x+1$ modulo $p$. Since $\mathbb Z_p^*$ is commutative, different roots $k$ result in non-isomorphic quasigroups by Proposition \ref{thm:kepka-nemec}. The number of roots was determined in Lemma \ref{lem:roots_of_f}.

For prime squared order, there are two possibilities. If $G=\mathbb Z_{p^2}$, we proceed similarly, reading the number of roots of $f$ modulo $p^2$ in Lemma \ref{lem:roots_of_f}. Let $G=(\mathbb Z_p)^2$. Its automorphism group is $GL(2,\mathbb F_p)$, hence we need to determine the number of conjugacy classes of matrices $A$ satisfying $f(A)=A^2 - A + I = 0$. For $p \equiv 1 \pmod 3$ and $p = 3$, $f$ splits over $\mathbb F_p$, hence such matrices are determined by their Jordan normal form. The key observation here is that if matrix $A$ satisfies $f(A)=0$, then every eigenvalue of $A$ is a root of $f$.

For $p = 3$, there are two possible Jordan forms
\[
  \begin{pmatrix}
    2 & 0 \\
     0 & 2 \\
  \end{pmatrix},\quad
  \begin{pmatrix}
    2 & 1 \\
     0 & 2 \\
  \end{pmatrix}.
\]
Both matrices satisfy the equality $f(A)=0$. Thus the number of isomorphism classes of affine Mendelsohn quasigroups with base group $\mathbb{Z}_{9}$ is 0 and with base group $(\mathbb{Z}_3)^2$ is 2. Hence $a(9)=0+2=2$.

For $p \equiv 1\,(\text{mod } 3)$, let $\tau_1$, $\tau_2$ be the two distinct roots of~$f$ over $\mathbb F_p$. There are five possibilities
\[
  \begin{pmatrix}
    \tau_1 & 0 \\
     0 & \tau_2 \\
  \end{pmatrix},\quad
  \begin{pmatrix}
    \tau_1 & 0 \\
     0 & \tau_1 \\
  \end{pmatrix},\quad
  \begin{pmatrix}
    \tau_2 & 0 \\
     0 & \tau_2 \\
  \end{pmatrix},\quad
  \begin{pmatrix}
    \tau_1 & 1 \\
     0 & \tau_1 \\
  \end{pmatrix},\quad
  \begin{pmatrix}
    \tau_2 & 1 \\
     0 & \tau_2 \\
  \end{pmatrix}.
\]
The former three matrices satisfy the equality $f(A)=0$, while the latter two matrices fail the equality. Thus when $p \equiv 1\,(\text{mod }3)$ the number of affine Mendelsohn quasigroups with base group $\mathbb{Z}_{p^2}$ is 2 and with base group $(\mathbb{Z}_p)^2$ is 3. Hence $a(p^2)=2+3=5$.

Finally consider the case $p \equiv 2 \pmod 3$. Let $A$ be a matrix satisfying $f(A)=0$. Since $f$ is irreducible over $\mathbb F_p$, $A$ has no eigenvector, hence $\{\mathbf v,A\mathbf v\}$ is a basis of $(\mathbb F_p)^2$, for any vector $\mathbf v\neq\mathbf 0$. Since $A(A\mathbf v) = A^2\mathbf v = (A - I)\mathbf v$, the matrix of the linear map given by $A$ in the basis $\{\mathbf v,A\mathbf v\}$ is
\[
  \begin{pmatrix}
    0 & -1 \\
    1 & 1 \\
  \end{pmatrix}.
\]
In particular, $A$ is conjugate to this matrix. Hence $a(p^2)=0+1=1$.
\end{proof}

Further values of $a(v)$ can be evaluated in GAP \cite{GAP} by a straightforward calculation using Proposition \ref{thm:kepka-nemec}. The values of $a(v)$ for prime powers $p^d<1000$ not covered by Lemma \ref{lem:necessary} and Theorem \ref{thm:enumeration} are summarised in Table \ref{tab:a_values}. 

\begin{table}[!hb]
\begin{center}
\begin{tabular}{c|ccccccccc}
$v$ & $2^4$ & $2^6$ & $2^8$ & $3^3$ & $3^4$ & $3^5$ & $3^6$ & $5^4$ & $7^3$\\
\hline
$a(v)$ & 2 & 3 & 5 & 3 & 5 & 7 & 11 & 2 & 10\\
\end{tabular}
\caption{Values of $a(v)$.}
\label{tab:a_values}
\end{center}
\end{table}

Commutative Moufang loops of orders 81 and 243 were classified by Kepka and N\v emec in \cite{KepNem}, Theorem 9.2, and the list is a part of the GAP package LOOPS \cite{loops} (we will use the notation of both \cite{KepNem} and LOOPS to refer to particular quasigroups and loops). 
Therefore we can (in theory) proceed similarly as in the affine case. A straightforward calculation shows that $b(81)=2$, with the loop $L(1)=\mathtt{MoufangLoop(81,1)}$ providing two distributive Mendelsohn quasigroups, $D(1)$ and $D(2)$, and the loop $L(2)=\mathtt{MoufangLoop(81,2)}$ none. For order 243, there is no distributive Mendelsohn quasigroup over the loops $L(i)$, $i=3,4,5,6$, that is $\mathtt{MoufangLoop(243,}i\mathtt{)}$ for $i=1,2,5,67$. For $L(2)\times\mathbb Z_3=\mathtt{MoufangLoop(243,57)}$ there is one distributive Mendelsohn quasigroup. For $L(1)\times\mathbb Z_3=\mathtt{MoufangLoop(243,56)}$, the automorphism group is too complicated and GAP fails to find the conjugacy classes; however, we see from the direct decomposition that the loop $\mathtt{MoufangLoop(243,56)}$ must provide at least two distributive quasigroups of order 243.
The numbers are summarised in Table \ref{tab:enumerationb}. An explicit description of the quasigroups of order 81 can be found in \cite{Sta}, Example 3.4.
The GAP code used for the calculations is available on our website\footnote{\tt{http://www.karlin.mff.cuni.cz/\~{}stanovsk/quandles}}.

\begin{table}[!hb]
\begin{center}
\begin{tabular}{c|ccccc}
$v$ & $3^1$ & $3^2$ & $3^3$ & $3^4$ & $3^5$ \\
\hline
$b(v)$ & 0 & 0 & 0 & 2 & $\geq3$\\
\end{tabular}
\caption{Values of $b(v)$.}
\label{tab:enumerationb}
\end{center}
\end{table}

Given an MTS($v$), $(V,\mathcal{B})$, its \textit{converse} is the MTS($v$), $(V,\mathcal{B}')$, obtained by writing all the blocks in the reverse order. In terms of the associated quasigroups, $Q=(V,\circ)$ and $Q'=(V,\circ')$, respectively, $Q'$ is the \emph{converse} of $Q$, i.e., $x\circ' y=y\circ x$.
A Mendelsohn triple system (respectively quasigroup) is not necessarily \textit{self-converse}, i.e., isomorphic to its converse.

\begin{prop}
Let $(V,\mathcal{B})$ be a Mendelsohn triple system such that the associated quasigroup $Q=\Aff(G,k)$ is distributive. Then $(V,\mathcal{B})$ is self-converse if and only if $k$ and $I-k$ are conjugate in $\mathrm{Aut}(G)$.
\end{prop}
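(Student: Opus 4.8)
The plan is to make the converse quasigroup explicit as an affine quasigroup over the \emph{same} commutative Moufang loop $G$, and then read off the criterion from the Kepka--N\v{e}mec isomorphism theorem (Proposition~\ref{thm:kepka-nemec}). Since an isomorphism of Mendelsohn triple systems is precisely an isomorphism of the associated Mendelsohn quasigroups, $(V,\mathcal B)$ is self-converse if and only if $Q\cong Q'$, where $Q'$ denotes the converse of $Q$.

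First I would compute $Q'$. Using commutativity of $G$,
\[
  x\circ' y = y\circ x = (I-k)(y)+k(x) = \bigl(I-(I-k)\bigr)(x)+(I-k)(y),
\]
so $Q'=\Aff(G,I-k)$ on the same loop $G$. To legitimately regard this as an affine quasigroup over a commutative Moufang loop, and to apply Proposition~\ref{thm:kepka-nemec} to it, I must check that $I-k$ is a nuclear automorphism and that $I-(I-k)=k$ is an automorphism. The latter is among the hypotheses, so the real point is nuclearity of $I-k$.

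For this I would use the defining relation $k-k^2=I$ from Proposition~\ref{prop:affine}(M), i.e.\ $k^2=k-I$. Since the nucleus $N(G)$ is invariant under automorphisms of $G$ and $x+k(x)\in N(G)$ (as $k$ is nuclear), applying $k$ gives $k(x)+k^2(x)=2k(x)-x\in N(G)$; adding this to $x+k(x)$ inside the nucleus yields $3k(x)\in N(G)$, and surjectivity of $k$ gives $3G\subseteq N(G)$. Consequently $\bigl(I+(I-k)\bigr)(x)=2x-k(x)=3x-(x+k(x))$ is a difference of two elements of $N(G)$, hence lies in $N(G)$, so $I-k$ is nuclear. (One may also record that $(I-k)-(I-k)^2=I$, confirming that $Q'$ is again a Mendelsohn quasigroup, in agreement with the general fact that the converse of a distributive Mendelsohn quasigroup is distributive.) All the small loop computations above take place in the subgroup of $G$ generated by $x$ and $k(x)$, invoking diassociativity.

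With the hypotheses verified, Proposition~\ref{thm:kepka-nemec} applied with $G_1=G_2=G$, $f=k$, $g=I-k$ gives: $Q\cong Q'$, i.e.\ $\Aff(G,k)\cong\Aff(G,I-k)$, if and only if there is $\psi\in\mathrm{Aut}(G)$ with $I-k=\psi k\psi^{-1}$, that is, if and only if $k$ and $I-k$ are conjugate in $\mathrm{Aut}(G)$. The only step that is not an immediate appeal to a quoted result is the verification that $I-k$ is nuclear; this is the technical heart of the argument, and it rests on the relation $k^2=k-I$ together with elementary properties of commutative Moufang loops.
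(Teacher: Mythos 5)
Your proof is correct and follows essentially the same route as the paper: identify the converse as $Q'=\Aff(G,I-k)$ over the same loop and apply the Kepka--N\v{e}mec criterion (Proposition~\ref{thm:kepka-nemec}). The only difference is that you additionally verify that $I-k$ is a nuclear automorphism (via $k^2=k-I$ and $3G\subseteq N(G)$), a hypothesis of Proposition~\ref{thm:kepka-nemec} that the paper's one-line proof leaves implicit; that check is correct and harmless.
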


\begin{proof}
The converse of $Q$ is the quasigroup $Q'=\Aff(G,I-k)$. According to Proposition \ref{thm:kepka-nemec}, $Q$ is isomorphic to $Q'$ if and only if there is an automorphism $\psi\in\mathrm{Aut}(G)$ such that $I-k=\psi k \psi^{-1}$, i.e., if and only if $k$ and $I-k$ are conjugate in $\mathrm{Aut}(G)$. 
\end{proof}

For example, if $G$ is a cyclic group, then $\mathrm{Aut}(G)$ is commutative, hence $Q$ is self-converse if and only if $k=I-k$, i.e., if and only if $Q$ is a Steiner quasigroup. In particular, a proper distributive MTS($p$) where $p$ is prime is never self-converse. Hence, as $a(p)=2$, the systems are the converse of each other.

\section{Anti-distributive Mendelsohn quasigroups}
\label{sec:antidistributive}

Both the\emph{ projective Steiner triple systems} and the \emph{Netto systems} are examples of anti-distributive (anti-mitre) Steiner triple systems:
\begin{itemize}
\item[(i)] Let $\mathbb{F}_2$ be the field of two elements and $V=(\mathbb{F}_2)^n\setminus\{\mathbf{0}\}$. Let $\mathcal{B}$ be the set of blocks $\{\mathbf{x},\mathbf{y},\mathbf{z}\}$ where $\mathbf{x},\mathbf{y},\mathbf{z}\in V$, $\mathbf{x}+\mathbf{y}+\mathbf{z}=\mathbf{0}$ and  $\mathbf{x}\neq\mathbf{y}\neq\mathbf{z}\neq\mathbf{x}$. This is the projective Steiner triple system PG($n-1,2$) of order $2^n-1$. The associated Steiner quasigroup is $((\mathbb{F}_2)^n\setminus\{\mathbf{0}\},\circ)$ where $\mathbf x\circ\mathbf y=\mathbf x+\mathbf y$ for $\mathbf x\neq\mathbf y$, and $\mathbf x\circ\mathbf x=\mathbf x$.
\item[(ii)] Let $p^d\equiv 7\,(\text{mod }12)$ where $p$ is prime. Let $\omega$ be a generator of the cyclic multiplicative group of order $p^d-1=12s+6$ of the Galois field $V=\mathbb F_{p^d}$. Let $\epsilon_1=\omega^{2s+1}$ and $\epsilon_2=\omega^{10s+5}$. Then $\epsilon_1\epsilon_2=\epsilon_1+\epsilon_2=1$. For $x,y\in V$ define $x<y$ if $y-x=\omega^i$ where $i$ is even. Either $x<y$ or $y<x$ but not both. Then the Netto system of order $p^d$ is determined by the Steiner quasigroup $(V,\circ)$ with $a\circ b=a\epsilon_1+b\epsilon_2$ whenever $a<b$, and $a\circ b=b\epsilon_1+a\epsilon_2$ whenever $b<a$. That is, the block containing the pair $\{a,b\}$ with $a<b$ is $\{a,b,a\epsilon_1+b\epsilon_2\}$. 
\end{itemize}
The above description of the Netto systems is due to Delandtsheer, Doyen, Siemons and Tamburini \cite{DelDoySieTam} and provides an interesting comparison to Lemma \ref{thm:menNet}. There, for $p^d=6s+1$ where $p$ is a prime, the affine MTS($p^d$) is constructed by defining the block containing the ordered pair $(a,b)$ to be $\langle a,b,a\epsilon_1+b\epsilon_2\rangle$ where $\epsilon_1=\omega^s$ and $\epsilon_2=\omega^{5s}$ where $\omega$ is a generator of the cyclic multiplicative group of order $p^d-1$. Here, in the Steiner triple system case, we must restrict our attention to when $p^d\equiv 7\,(\text{mod }12)$ so that an order can be assigned to the two elements $a$ and $b$ with $a\circ b$ being defined differently depending on whether $a<b$ or $b<a$. This `split case' is the essential reason behind why the Steiner case is anti-distributive while the Mendelsohn case is distributive.

The study of anti-mitre STS($v$) was begun in \cite{ColMenRosSir} and the existence spectrum, $v\equiv 1\text{ or }3\,(\text{mod }6)$, $v\neq 9$, was finally determined by Fujiwara \cite{Fuj1,Fuj2} and Wolfe \cite{Wol}.
\begin{thm}[Fujiwara \& Wolfe]
\label{thm:antimitre}
 An anti-mitre STS($v$) exists if and only if $v\equiv 1\text{ or }3 \,(\text{mod }6)$ and $v\neq 9$.
\end{thm}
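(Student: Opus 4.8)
The theorem combines an easy necessity direction with a substantial sufficiency direction; the latter is the combined content of \cite{ColMenRosSir, Fuj1, Fuj2, Wol}, and I only sketch its architecture rather than carry it out.

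For necessity, the congruence $v\equiv 1$ or $3\,(\text{mod }6)$ is forced by the mere existence of an STS($v$). To exclude $v=9$, recall that up to isomorphism the only STS($9$) is AG($2,3$); its Steiner quasigroup $\Aff((\mathbb Z_3)^2,-I)$ is medial, hence distributive, hence not anti-distributive, so by the remark preceding the theorem the system is not anti-mitre. (Explicitly, with $z=(0,0)$ and the three lines through $z$ in directions $(1,0),(1,1),(1,2)$, labelling their remaining points $b=(1,0),x=(2,0)$; $g=(1,1),c=(2,2)$; $a=(1,2),y=(2,1)$, one has $b+g+a=\mathbf 0=x+c+y$, so the five blocks $\{z,b,x\},\{z,g,c\},\{z,a,y\},\{b,g,a\},\{x,c,y\}$ form a mitre.) Hence no anti-mitre STS($9$) exists.

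For sufficiency, one must produce, for every $v\equiv 1$ or $3\,(\text{mod }6)$ with $v\neq 9$, an STS($v$) containing no mitre; equivalently, one whose Steiner quasigroup violates $(x\circ y)\circ z=(x\circ z)\circ(y\circ z)$ at every triple of distinct, non-collinear points. I would follow the standard ingredients-plus-recursion strategy for triple systems. First, assemble a finite stock of anti-mitre objects of small order: anti-mitre STS($v$) for $v=7,13,15,19,21,25,27,\dots$, obtained either by explicit (typically cyclic or rotational) constructions --- where the anti-mitre condition translates into forbidding certain patterns among the base blocks of a difference family, hence is easy to impose --- or by computer search as in \cite{ColMenRosSir, Fuj1, Fuj2, Wol}; and, likewise, anti-mitre group divisible designs with small block sizes, such as $\{3\}$-GDDs of type $g^u$, to serve as glue. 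Then apply the classical recursions for STS --- the tripling $v\mapsto 3v$, the doubling-type constructions $v\mapsto 2v+1$ and $v\mapsto 2v+7$, and, most flexibly, constructions driven by pairwise balanced designs and group divisible designs: partition the point set along a PBD (or GDD) whose block sizes lie in a set for which anti-mitre ingredients are available, place an anti-mitre STS on each block or group, and complete the construction with anti-mitre GDDs. A bounded case analysis on the ways the seven points and five blocks of a hypothetical mitre can be distributed among the pieces shows that, if the ingredients are chosen to exclude exactly the relevant sub-configurations, the assembled design is again anti-mitre; iterating covers all sufficiently large admissible $v$. The finitely many small admissible orders not reached by the recursions are then settled individually --- the part completed in \cite{Fuj1, Fuj2, Wol} --- and it is precisely here that $v=9$ shows up as the single admissible order that cannot be completed.

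The main obstacle is the bookkeeping needed to prevent a mitre from arising \emph{across} the pieces of a recursive construction: a mitre occupies seven points and five blocks, which a PBD/GDD construction may scatter over several groups and component designs, so one has to enumerate all such splittings and then ensure that every ingredient --- each small STS and each GDD --- simultaneously avoids all of the resulting forbidden sub-configurations. Producing a family of small ingredients meeting all of these constraints at once, and disposing of the residual small orders, is the genuinely hard and largely computational core of the theorem.
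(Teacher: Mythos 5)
This theorem is quoted in the paper as an external result --- the authors give no proof, simply citing \cite{ColMenRosSir,Fuj1,Fuj2,Wol} --- so there is no internal argument to compare against. Your necessity direction is correct and complete: the congruence follows from the STS existence spectrum, and your explicit mitre in AG($2,3$) checks out ($b+g+a=\mathbf 0=x+c+y$ with the seven points distinct), which together with the uniqueness of the STS($9$) rules out $v=9$. Your sufficiency direction is, as you say yourself, only an architectural sketch of the difference-family/PBD-recursion machinery in the cited papers rather than a proof, but that is consistent with the paper's own treatment, which imports the result wholesale.
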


Observe that, by taking an anti-mitre STS($v$) and writing each block in both of its two cyclic orders, we obtain a Mendelsohn triple system whose associated quasigroup is anti-distributive. Thus, as a consequence of Theorem \ref{thm:antimitre}, we obtain the following result.
\begin{thm}
There exists an MTS($v$) whose associated Mendelsohn quasigroup is  anti-distributive for all $v\equiv 1\text{ or }3\,(\text{mod }6)$ and $v\neq 9$.
\end{thm}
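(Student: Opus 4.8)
The plan is to obtain the statement as an immediate corollary of the Fujiwara--Wolfe Theorem~\ref{thm:antimitre}, using the fact recorded above that a Steiner quasigroup is anti-distributive precisely when its Steiner triple system is anti-mitre. The one thing that genuinely needs checking is that anti-distributivity is inherited when one passes from an anti-mitre STS$(v)$ to the (non-proper) MTS$(v)$ obtained by writing each block in both of its cyclic orders; this is where a short argument, rather than a mere citation, is called for, although it is routine.

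First I would fix $v\equiv 1$ or $3\,(\text{mod }6)$ with $v\neq 9$, apply Theorem~\ref{thm:antimitre} to obtain an anti-mitre STS$(v)$, say $(V,\mathcal B)$, and let $(V,\circ)$ denote its associated Steiner quasigroup. Next I would form the MTS$(v)$ $(V,\mathcal B^\ast)$ whose cyclically ordered blocks are $\langle x,y,z\rangle$ and $\langle x,z,y\rangle$ for each $\{x,y,z\}\in\mathcal B$; as noted in Section~\ref{sec:intro}, the Mendelsohn quasigroup associated to $(V,\mathcal B^\ast)$ is again $(V,\circ)$.

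It then remains to verify that $(V,\circ)$, regarded as a Mendelsohn quasigroup, is anti-distributive. An ordered triple $(x,y,z)$ of distinct points is a block of $(V,\mathcal B^\ast)$ if and only if $\{x,y,z\}\in\mathcal B$, so the ordered triples of distinct points that are \emph{not} blocks of $(V,\mathcal B^\ast)$ are exactly the ordered triples $(x,y,z)$ of distinct points with $\{x,y,z\}\notin\mathcal B$. Since the operation $\circ$ is literally unchanged, the identity $x\circ(y\circ z)=(x\circ y)\circ(x\circ z)$ holds for such a triple in the Mendelsohn quasigroup if and only if it holds in the Steiner quasigroup; and because $(V,\mathcal B)$ is anti-mitre, the Steiner quasigroup is anti-distributive, so no such triple satisfies this identity. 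Hence no such triple satisfies it in the Mendelsohn quasigroup either, and $(V,\mathcal B^\ast)$ is the required anti-distributive MTS$(v)$.

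The main obstacle is, frankly, that there is none of substance: all the difficulty resides in Theorem~\ref{thm:antimitre}. The only point that warrants care is the bookkeeping observation that an STS and the MTS obtained by doubling its blocks share both their underlying idempotent quasigroup and their family of ``three distinct points not forming a block'', together with the standing remark that triples with a repeated coordinate satisfy distributivity automatically in any idempotent quasigroup and so are correctly excluded from the definition of anti-distributivity.
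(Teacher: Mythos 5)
Your proposal is correct and is exactly the paper's argument: the theorem is stated there as an immediate consequence of the Fujiwara--Wolfe Theorem~\ref{thm:antimitre}, obtained by writing each block of an anti-mitre STS$(v)$ in both cyclic orders and observing that the associated quasigroup (and hence anti-distributivity) is unchanged. Your additional bookkeeping about which ordered triples fail to be blocks simply makes explicit what the paper leaves as an observation.
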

However our interest is in constructing proper Mendelsohn triple systems whose associated Mendelsohn quasigroups are anti-distributive. Such a result is obtained in Theorem 3.2, the proof of which may be simplified by considering the following lemma.

\begin{lem}
\label{lem:right_to_left}
Let $(V,\mathcal{B})$ be a Mendelsohn triple system with associated quasigroup $(V,\circ)$. Suppose that every ordered triple of distinct elements of $V$ that are not blocks in $\mathcal{B}$ violate right distributivity. Then they also violate left-distributivity, thus $(V,\mathcal{B})$ is anti-distributive.
\end{lem}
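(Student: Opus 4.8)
The plan is to exploit the semi-symmetric identity $L_x R_x = I$, equivalently $x \circ (y \circ x) = y$, which tells us that in a Mendelsohn quasigroup each block $\langle a, b, c\rangle$ generates a full orbit of blocks under cyclic rotation, and that the operation is "invertible" in a strong sense: from any two of $a, b, c$ we can recover the third. More precisely, $a \circ b = c$ forces $b \circ c = a$ and $c \circ a = b$. The strategy is to take a triple $(x, y, z)$ of distinct non-collinear points for which left distributivity holds, and manufacture from it a triple $(x', y', z')$ of distinct non-collinear points for which right distributivity holds; the hypothesis then gives a contradiction. The natural candidate is to rename the points of the failed-left-distributivity instance so that the left-translation identity becomes a right-translation identity under the quasigroup's internal symmetry.

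First I would write out what left distributivity at $(x, y, z)$ says: $x \circ (y \circ z) = (x \circ y) \circ (x \circ z)$. Using $L_x R_x = I$ we have $L_x = R_x^{-1}$, so $x \circ w = R_x^{-1}(w)$, i.e. $x \circ w$ is the unique $u$ with $u \circ x = w$. I would then translate the left-distributivity equation into a statement purely about the operation $w \mapsto w \circ t$ (right translations) by substituting $w \circ x$ for various arguments, and check that it coincides with right distributivity at some explicitly described triple $(x', y', z')$. The key bookkeeping point is that the map realising this translation is a bijection on ordered triples of distinct points, and that it sends non-blocks to non-blocks: if $(x, y, z)$ is a non-block then so is its image, because blocks are exactly the triples of distinct points satisfying $x \circ y = z$ (cyclically), and this defining relation is preserved by the renaming (it is a block iff all three rotations are blocks, and rotation is exactly the internal symmetry we are using).

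The main obstacle — and the one genuinely delicate step — is getting the substitution exactly right so that "left distributivity at $(x, y, z)$" becomes "right distributivity at $(x', y', z')$" with $x', y', z'$ distinct and non-collinear. One expects the correct assignment to involve applying a fixed translation to each coordinate, something like $x' = L_x^{-1}(\text{something})$, and the verification that distinctness is preserved (i.e. $x' \neq y'$, etc.) uses idempotence together with the fact that $L_x, R_x$ are permutations; the verification that non-block is preserved uses the cyclic structure of blocks noted above. Once the bijection and its properties are established, the contrapositive is immediate: if some non-block $(x, y, z)$ violated left distributivity, its image would be a non-block violating right distributivity, contradicting the hypothesis — but we want the converse direction, so we run the argument with the roles reversed, showing every non-block satisfying the right-distributive law would pull back to a non-block satisfying the left-distributive law, hence no non-block can satisfy left distributivity, which is exactly anti-distributivity. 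Finally I would record that the ordered triples with a repeated coordinate or forming a block satisfy both distributive laws trivially (as already observed in Subsection on anti-distributivity), so "every non-block violates left distributivity" is precisely the definition of anti-distributive, completing the proof.
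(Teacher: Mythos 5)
Your high-level strategy is the same as the paper's (turn a left-distributive triple into a right-distributive one via the semi-symmetric law $L_x=R_x^{-1}$), but the proposal stops exactly where the proof starts: you flag ``getting the substitution exactly right'' as the delicate step and then never produce the substitution. The missing content is the explicit triple. Writing $c=x\circ y$, $d=x\circ z$, $a=y\circ z$ and $b=x\circ a$, the assumed left distributivity at $(x,y,z)$ says $c\circ d=b$; semi-symmetry applied to the blocks $\langle x,a,b\rangle$, $\langle x,y,c\rangle$, $\langle x,z,d\rangle$ gives $b\circ x=a$, $c\circ x=y$, $d\circ x=z$, whence
\[
(c\circ d)\circ x=b\circ x=a=y\circ z=(c\circ x)\circ(d\circ x),
\]
so the triple $(c,d,x)$ --- which is distinct because $x,y,z$ are --- satisfies \emph{right} distributivity. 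Without exhibiting this triple and this computation there is no proof, only a statement of intent.

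There is also a concrete error in the part you do argue. You claim the renaming ``sends non-blocks to non-blocks'' because ``rotation is exactly the internal symmetry we are using''; but the relevant map is $(x,y,z)\mapsto(x\circ y,\,x\circ z,\,x)$, which is not a cyclic rotation of the triple, so that justification does not apply. Moreover the logic should run the other way: the hypothesis forces the right-distributive triple $(c,d,x)$ to \emph{be} a block, and one must then show this drags the original triple into $\mathcal{B}$. Indeed $\langle c,d,x\rangle\in\mathcal{B}$ gives $d\circ x=c$, while $\langle x,z,d\rangle\in\mathcal{B}$ gives $d\circ x=z$, so $c=z$ and $\langle x,y,z\rangle=\langle x,y,c\rangle\in\mathcal{B}$, contradicting that $(x,y,z)$ was a non-block. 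This identity-chasing is the substance of the lemma; the surrounding remarks about bijections on ordered triples and contrapositives do not substitute for it.
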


\begin{proof}
Consider an ordered triple of distinct elements $(x,y,z)$ where $x,y,z\in V$ and $\langle x,y,z\rangle\not\in \mathcal{B}$. Suppose that $(x,y,z)$ satisfies left distributivity, i.e.,
$$x\circ(y\circ z)=(x\circ y)\circ (x\circ z).$$
Then there exist $a,b,c,d\in V$ such that 
$\langle y,z,a\rangle, \langle x,a,b\rangle, \langle x,y,c\rangle, \langle x,z,d\rangle, \langle c,d,b\rangle\in\mathcal{B}.$
Note that, as $x$, $y$ and $z$ are all distinct, $x$, $c$ and $d$ are all distinct. Thus $$(c\circ d)\circ x=b\circ x=a=y\circ z=(c\circ x)\circ(d\circ x).$$
As $c$, $d$ and $x$ are all distinct, $\langle c,d,x\rangle \in \mathcal{B}$. So $c=z$ and $\langle x,y,z\rangle\in \mathcal{B}$, a contradiction.
\end{proof}

\begin{thm}
\label{thm:anti-dist}
There exists a proper MTS($v$) whose associated Mendelsohn quasigroup is anti-distributive for all $v\equiv 3\text{ or }7\,(\text{mod }12)$ except possibly for $v=19$.
\end{thm}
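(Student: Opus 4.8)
The plan is to build the required proper Mendelsohn triple systems explicitly for the two relevant residue classes modulo $12$, using a construction in the spirit of Lemma~\ref{thm:menNet} but tuned so that the resulting Mendelsohn quasigroup violates distributivity on every non-block triple. Since $v\equiv 3\text{ or }7\pmod{12}$ forces $v$ to be odd and (for the prime-power case) $v=p^d$ with $p^d\equiv 3\text{ or }7\pmod{12}$, we have $p=3$ (then $v\equiv3\pmod{12}$ forces $d$ odd, $d\geq1$) or $p\equiv7\pmod{12}$ or $p\equiv 11\pmod{12}$ with suitable constraints; in all cases $3\mid(p^d-1)$ when $p\neq3$, so $\mathbb{F}_{p^d}$ contains a primitive cube root of unity $\zeta$. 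The natural candidate is a ``twisted'' quasigroup on $V=\mathbb{F}_{p^d}$ of the form $a\circ b=\lambda a+\mu b$ with $\lambda+\mu=1$ and $\lambda\mu=1$, i.e. $\lambda,\mu$ the two roots of $x^2-x+1$; these exist in $\mathbb{F}_{p^d}$ precisely when $6\mid(p^d-1)$, giving the affine MTS. That system is distributive, so to get anti-distributivity one instead wants a genuinely non-affine rule. I would use a split construction analogous to the Netto description: partition the ordered pairs into two classes by means of the quadratic-residue (or, more precisely, the even-power) structure of $\mathbb{F}_{p^d}^*$, and define $a\circ b$ by one linear rule on one class and a different linear rule on the other, chosen so that each rule separately gives a valid Mendelsohn quasigroup but their mismatch destroys distributivity off the blocks. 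The $v\equiv3\pmod{12}$ case with $v$ a power of $3$, and composite $v$, would be handled by a product/recursive argument or by a direct ad hoc construction; the residue classes $3$ and $7$ mod $12$ are exactly those where such a split is combinatorially available (mirroring why $p^d\equiv7\pmod{12}$ was the good case for Netto systems), which is presumably why these are the classes claimed.

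The key steps, in order, would be: (1) fix the algebraic data — for $v=p^d$ with $p\neq 3$ take a generator $\omega$ of $\mathbb{F}_{p^d}^*$ and define a ``sign'' $\sigma(a,b)$ depending on whether $b-a$ is an even or odd power of $\omega$, and pick field elements $\epsilon_1,\epsilon_2$ (and their ``twisted'' counterparts) so that both $\langle a,b,a\epsilon_1+b\epsilon_2\rangle$ and the alternate rule are semi-symmetric; (2) verify that the resulting $(V,\mathcal B)$ is genuinely a Mendelsohn triple system, i.e. every ordered pair lies in exactly one block and the system is proper (no block $\langle a,b,c\rangle$ has its reverse $\langle a,c,b\rangle$ also present — this is where the split is essential and where $v=19$ may fail); (3) compute, for an arbitrary ordered triple $(x,y,z)$ of distinct non-block elements, the two sides of right distributivity $(x\circ z)\circ(y\circ z)$ versus $(x\circ y)\circ z$, reduce the equality to a polynomial/field identity in $x,y,z$ and the constants, and show it can hold only when $\langle x,y,z\rangle$ is already a block; (4) invoke Lemma~\ref{lem:right_to_left} to upgrade from right-anti-distributivity to full anti-distributivity, which halves the casework; (5) dispose of the remaining values of $v$ in the two residue classes — powers of $3$ congruent to $3\bmod12$, and composite $v$ — either by a separate small construction or by noting that a product of an anti-distributive system with an appropriate affine one (or a recursive/PBD-type argument) stays anti-distributive, taking care that the product of two distributive-failing quasigroups does not accidentally restore distributivity.

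I expect the main obstacle to be step~(3): controlling distributivity on \emph{all} non-block triples simultaneously. Because the operation $\circ$ is defined piecewise according to the sign function $\sigma$, the expressions $x\circ y$, $x\circ z$, $y\circ z$, and their further combinations each split into cases depending on several ``order'' comparisons among $x,y,z$ and the intermediate values, so the distributive identity fragments into a moderately large case analysis; in each branch one gets a linear or low-degree relation over $\mathbb{F}_{p^d}$, and one must check that the only common solutions are the block triples. Getting the constants $\epsilon_1,\epsilon_2$ (and the twist) right so that this works in every branch — rather than leaving some non-block triple accidentally distributive — is the delicate part, and it is plausibly exactly this constraint that forces the exclusion of $v=19$ (where the field is too small for the needed elements to be in ``general position''). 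A secondary, more routine obstacle is verifying properness and the quasigroup axioms for the piecewise rule, and ensuring the recursive/product step for composite orders genuinely preserves anti-distributivity rather than merely distributivity-failure on a sparse set.
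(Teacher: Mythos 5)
Your proposal sketches a genuinely different strategy from the paper's, but as it stands it has gaps that are not merely technical. The central one is that your step (3) --- verifying that a piecewise ``split Netto-type'' rule on $\mathbb{F}_{p^d}$ violates right distributivity on \emph{every} non-block triple --- is exactly the content of the theorem and is left entirely open: you neither exhibit the constants nor carry out the case analysis, and it is far from clear that such a rule can be made simultaneously semi-symmetric, proper, and anti-distributive. The second, fatal, gap is the treatment of composite orders and powers of $3$: a direct product of anti-distributive quasigroups is essentially never anti-distributive, because pairing a degenerate (repeated-entry) triple in one factor with a block triple in the other yields a triple of distinct non-block elements in the product that satisfies distributivity; so the ``product/recursive argument'' you defer to cannot work, and a large proportion of the orders $v\equiv 3$ or $7\pmod{12}$ (e.g.\ $15$, $27$, $39$, $51$, \dots) are composite or powers of $3$. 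Finally, your guess about the exclusion of $v=19$ (the field being too small) is not the actual reason.

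The paper's proof is instead a combinatorial doubling: take an anti-mitre STS($v$) (Theorem \ref{thm:antimitre}), orient its blocks arbitrarily, and build a proper MTS($2v+1$) on $(V\times\{0,1\})\cup\{\infty\}$ with eight oriented blocks for each original block plus blocks through $\infty$. A non-block triple avoiding $\infty$ that satisfied right distributivity would project to a mitre in the STS (contradiction); triples through $\infty$ are checked directly; and Lemma \ref{lem:right_to_left} upgrades right-anti-distributivity to full anti-distributivity. The residue classes $3,7\pmod{12}$ and the exclusion of $19$ then fall out of the anti-mitre spectrum $v\equiv 1,3\pmod 6$, $v\neq 9$, under $v\mapsto 2v+1$, rather than from any field-theoretic constraint. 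If you want to pursue your direct algebraic route you would need, at a minimum, an explicit verified operation for each prime power and an entirely different mechanism for composite orders.
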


\begin{proof}
Let $(V,\mathcal{C})$ be an anti-mitre STS($v$) and let $(V,\star)$ be its associated Steiner quasigroup. For each block $\{a,b,c\}\in\mathcal{C}$ arbitrarily choose either the cyclic orientation $\langle a,b,c\rangle$ or the cyclic orientation $\langle a,c,b\rangle$. Once we have assigned these orientations, collectively the blocks have the property that every unordered pair, $\{x,y\}$, of distinct elements, occurs in a unique block either as the ordered pair $(x,y)$ or the ordered pair $(y,x)$. 
Without loss of generality, we assume that we chose the cyclic orientation $\langle a,b,c\rangle$ and we will denote the resulting collection of cyclically ordered blocks by $\mathcal{B}$. 

Let $V'=(V\times\{0,1\})\cup\{\infty\}$ and for ease of notation we will write $(a,j)\in (V\times\{0,1\})$ as $a_j$. Further we define the following set of cyclically ordered blocks 
$$\mathcal{B}'=\{\langle a_0,b_0,c_0\rangle, \langle a_1,b_1,c_0\rangle, \langle a_1,b_0,c_1\rangle, \langle a_0,b_1,c_1\rangle, \langle a_0,c_0,b_1\rangle, \langle a_0,c_1,b_0\rangle, \langle a_1,c_0,b_0\rangle,$$
$$\langle a_1,c_1,b_1\rangle:\langle a,b,c\rangle \in\mathcal{B}\} \,\cup\, \{\langle\infty,x_0,x_1\rangle, \langle\infty,x_1,x_0\rangle :x\in V\}.$$
We claim that the ordered pair $(V',\mathcal{B}')$ is a proper Mendelsohn triple system and that its associated Mendelsohn quasigroup is anti-distributive.

First we will show that $(V',\mathcal{B}')$ is a proper Mendelsohn triple system. Consider an unordered pair of elements from $V$, say $\{x,y\}$, then the ordered pairs $(x_0,y_0)$, $(x_0,y_1)$, $(x_1,y_0)$, $(x_1,y_1)$, $(y_0,x_0)$, $(y_0,x_1)$, $(y_1,x_0)$ and $(y_1,x_1)$ all occur in cyclically ordered blocks of $\mathcal{B}'$. Moreover, for all $x\in V$, the set $\mathcal{B}'$ contains cyclically ordered blocks which in turn contain the ordered pairs $(\infty,x_0)$, $(\infty,x_1)$, $(x_0,\infty)$, $(x_1,\infty)$, $(x_0,x_1)$ and $(x_1,x_0)$. Finally, as $(V,\mathcal{C})$ was a STS($v$) none of these ordered pairs appears more than once; hence, $(V',\mathcal{B}')$ is indeed a Mendelsohn triple system and it is easy to see that the system is proper.

Let $(V',\circ)$ be the associated Mendelsohn quasigroup of $(V',\mathcal{B}')$.
It remains to show that $(V',\circ)$ is anti-distributive. Thus, by Lemma \ref{lem:right_to_left}, showing that all ordered triples of distinct points in $V'$ which are not blocks of $\mathcal{B}'$ violate right distributivity completes the proof. We consider two cases, where $\infty$ is not an element of such an ordered triple and when $\infty$ is an element of such an ordered triple.
\begin{itemize}
\item[(1)] Suppose $(x_i,y_j,z_k)$ is an ordered triple of distinct elements, where $x,y,z\in V$, $i,j,k\in \{0,1\}$ and that $\langle x_i,y_j,z_k\rangle\not\in\mathcal{B}'$. Further suppose, for a contradiction, that $(x_i\circ y_j)\circ z_k=(x_i\circ z_k)\circ(y_j\circ z_k)$. Then $\{\langle x_i\circ y_j, z_k, (x_i\circ z_k)\circ(y_j\circ z_k)\rangle, \langle x_i,y_j,x_i\circ y_j\rangle, \langle x_i,z_k,x_i\circ z_k\rangle, \langle y_j,z_k,y_j\circ z_k\rangle, \langle x_i\circ z_k,y_j\circ z_k,(x_i\circ z_k)\circ(y_j\circ z_k)\rangle\}\subseteq\mathcal{B}'$, but this means that $\{x\star y, z, (x\star z)\star(y\star z)\}$, $\{x,y,x\star y\}$, $\{x,z,x\star z\}$, $\{y,z,y\star z\}$ and $\{x\star z,y\star z,(x\star z)\star(y\star z)\}$ are all blocks in $\mathcal{C}$, contradicting the fact that $(V,\mathcal{C})$ is an anti-mitre STS($v$).
\item[(2)] Suppose that $(a,b,c)$ is an ordered triple of distinct elements where $\langle a,b,c\rangle\not\in\mathcal{B}'$ and one of the following holds $(a,b,c)=(x_i,y_j,\infty)$ or  $(a,b,c)=(x_i,\infty,y_j)$ or $(a,b,c)=(\infty,x_i,y_j)$ where $x,y\in V$ and $i,j\in\{0,1\}$. We will consider these three cases separately. Note that in all three cases there exists   $z_k\in V\times\{0,1\}$ such that $\langle x_i,y_j,z_k\rangle\in\mathcal{B}'$. Subscript arithmetic is modulo 2.
\begin{itemize}
\item[(2.1)] Suppose that $(a,b,c)=(x_i,y_j,\infty)$. Then $(x_i\circ y_j)\circ \infty= z_k\circ \infty=z_{k+1}$ and $(x_i\circ \infty)\circ(y_j\circ \infty)=x_{i+1}\circ y_{j+1}=z_k\neq z_{k+1}$.
\item[(2.2)] Suppose that $(a,b,c)=(x_i,\infty,y_j)$.  Then $(x_i\circ \infty)\circ y_j= x_{i+1}\circ y_j=z_{k+1}$ and $(x_i\circ y_j)\circ(\infty\circ y_j)=z_k\circ y_{j+1}=x_{i}\neq z_{k+1}$.
\item[(2.3)] Finally suppose that $(a,b,c)=(\infty,x_i,y_j)$. Then $(\infty\circ x_i)\circ y_j= x_{i+1}\circ y_j=z_{k+1}$ and $(\infty\circ y_j)\circ(x_i\circ y_j)=y_{j+1}\circ z_k=x_{i+1}\neq z_{k+1}$.
\end{itemize}
\end{itemize}

From Theorem \ref{thm:antimitre} we know that an anti-mitre STS($v$) exists if and only if $v\equiv 1\text{ or }3 \,(\text{mod }6)$ and $v\neq 9$, and the result follows.
\end{proof}

The above theorem is a step towards establishing the existence spectrum for proper anti-distributive Mendelsohn triple systems. We expect that determining the entire spectrum, as was the case for anti-mitre Steiner triple systems, may be very difficult.


\begin{thebibliography}{99}

\bibitem{Bel} V.D. Belousov, \textit{Osnovy teorii kvazigrupp i lup (Russian) [Foundations of the theory of quasigroups and loops]}, Izdat. ``Nauka'', Moscow (1967). 

\bibitem{Ben}
L. B\'en\'etau, Commutative Moufang loops and related groupoids. in: O. Chein, H. O. Pflugfelder, J. D. H. Smith (eds.), \textit{Quasigroups and Loops: Theory and Applications.} Sigma Series in Pure Math. 9, Heldermann Verlag (1990), 115--142.

\bibitem{Bru}
R.H. Bruck, \emph{A survey of binary systems}, Ergebnisse der Mathematik und ihrer Grenzgebiete, Springer Verlag, Berlin-G\"ottingen-Heidelberg, 1958.

\bibitem{CRC} C.J. Colbourn and J.H. Dinitz, (Editors), \textit{Handbook of Combinatorial Designs 2nd edition}, Chapman and Hall/CRC Press, Boca Raton (2007).

\bibitem{ColMenRosSir} C.J. Colbourn, E. Mendelsohn, A. Rosa and J. \v{S}ir\'a\v{n}, Anti-mitre Steiner triple systems, \textit{Graphs Combin.} \textbf{10} (1994), 215--224. 

\bibitem{DelDoySieTam} A. Delandtsheer, J. Doyen, J. Siemons and C. Tamburini, Doubly homogeneous 2-($v,k,1$) designs, \textit{J. Combin. Theory Ser. A} \textbf{43} (1986), 140--145.

\bibitem{Fuj1} Y. Fujiwara, Constructions for anti-mitre Steiner triple systems, \textit{J. Combin. Des.} \textbf{13} (2005), 286--291.

\bibitem{Fuj2} Y. Fujiwara, Infinite classes of anti-mitre and 5-sparse Steiner triple systems, \textit{J. Combin. Des.} \textbf{14} (2006), 237--250.

\bibitem{Gal} V.M. Galkin, Finite distributive quasigroups (Russian), \textit{Mat. Zametki} \textbf{24} (1978), 39--41.

\bibitem{GAP}
The GAP Group, GAP -- Groups, Algorithms, and Programming, Version 4.5.5; 2012. \texttt{(http://www.gap-system.org)}

\bibitem{Hall} M. Hall, Automorphisms of Steiner triple systems, \textit{IBM J. Res. Develop.} \textbf{4} (1960), 460--472.

\bibitem{KepNem} T. Kepka and P. N\v{e}mec, Commutative Moufang loops and distributive groupoids of small orders, \textit{Czechoslovak Math. J.} \textbf{31} (1981), 633--669.

\bibitem{Kirk} T.P. Kirkman, On a problem in combinations, \textit{Cambridge and Dublin Math. J.} \textbf{2} (1847), 191--204.

\bibitem{Men} N.S. Mendelsohn, A natural generalization of Steiner triple systems, in \textit{Computers in Number Theory}, Academic Press, New York (1971), 323--338.

\bibitem{loops}
G.P.~Nagy and P.~Vojt\v{e}chovsk\'y, \texttt{LOOPS}: Computing with quasigroups and loops in GAP, version 2.2.0. (\texttt{www.math.du.edu/loops})

\bibitem{Smith} J.D.H. Smith, Finite distributive quasigroups, \textit{Math. Proc. Cambridge Philos. Soc.} \textbf{80} (1976), 37--41.

\bibitem{Sob} J-P. Soublin, \'Etude alg\'ebrique de la notion de moyenne (French), \textit{J. Math. Pures Appl.} \textbf{50} (1971), 53--264.

\bibitem{Sta} D. Stanovsk\'y, A guide to self-distributive quasigroups, or latin quandles \textit{Quasigroups and Related Systems} \textbf{23/1} (2015), 91--128.

\bibitem{Wol} A. Wolfe, The resolution of the anti-mitre Steiner triple system conjecture, \textit{J. Combin. Des.} \textbf{14} (2006), 229--236. 

\end{thebibliography}
\end{document}